\newtheorem{remark}{Remark}
\newtheorem{ex}{Example}
\newcommand{\conitope}{{\mbox{\rm Coni}}}
\newcommand{\coni}{{\mbox{\rm Coni}}}
\newcommand{\co}{{\mathbb C}}
\newcommand{\re}{{\mathbb R}}
\newcommand{\C}{{\mathbb C}}
\newcommand{\rn}{{\mathbb R^n}}
\newcommand{\cn}{{\mathbb C^n}}
\newcommand{\real}{{\rm Re }}
\newcommand{\cS}{{\cal{S}}}
\newcommand{\cP}{{\cal{P}}}
\newcommand{\cM}{{\cal{M}}}
\newcommand{\cm}{{\cal{M}}}
\newcommand{\cK}{{{K}}}
\newcommand{\conv}{{{\rm Conv} }}
\newcommand{\inter}{{{\rm int} }}
\newcommand{\RE}{{\rm Re}}
\newcommand{\p}{\prime}
\def\PF{{\mathcal P}({\cal M})}
\def\nxn{n\! \times \!n\,}
\newcommand{\N}{{\mathbb N}}
\def\P{{\mathcal P}}
\newtheorem{thm}{Theorem}
\newtheorem{proposition}{Proposition}
\newtheorem{definition}{Definition}
\newtheorem{lemma}{Lemma}
\title{\LARGE \bf
Lifted polytope methods for stability analysis of switching systems}
\author{Rapha\"el~M. Jungers 
\thanks{ICTEAM Institute, Universit\'e catholique de Louvain, 4 avenue Georges Lemaitre,
B-1348 Louvain-la-Neuve, Belgium. R.J. is supported by the "Communaut\'e francaise de Belgique - Actions de Recherche Concert\'ees", and by the Belgian Programme on Interuniversity Attraction Poles initiated by the Belgian Federal Science Policy Office.  R.J. is a F.R.S.-FNRS fellow. {\tt\small raphael.jungers@uclouvain.be}.}
\and Nicola Guglielmi\thanks{Dipartimento di Matematica Pura e Applicata,
       Universit\`a di L'Aquila, via Vetoio,
       67010 L'Aquila, Italy.
       {\tt\small guglielm@univaq.it}.}
\and Antonio Cicone \thanks{Dipartimento di Matematica Pura e Applicata,
       Universit\`a di L'Aquila, via Vetoio,
       67010 L'Aquila, Italy.
       {\tt\small antonio.cicone@univaq.it}.}}
\begin{document}

\maketitle
\thispagestyle{empty}
\pagestyle{empty}


\begin{abstract}We describe new methods for deciding the stability of switching systems.  The methods build on two ideas previously appeared in the literature: the polytope norm iterative construction, and the lifting procedure.  Moreover, the combination of these two ideas allows us to introduce a pruning algorithm which can importantly reduce the computational burden.

We prove several appealing theoretical properties of our methods like a finiteness computational result which extends a known result for unlifted sets of matrices, and provide numerical examples of their good behaviour. \end{abstract}

\section{Introduction}

A \emph{switching discrete time linear dynamical system} in $\mathbb R^n$ is defined by 
 \begin{eqnarray}\label{switching-system}
 x_{t+1}&=& A_{\sigma(t)}x_t \quad A_{\sigma(t)}\in \cM\\
\nonumber x_0 & \in & \mathbb R^n,
 \end{eqnarray}
where $\cM=\left\{A_i\right\}_{i\in {\cal I}}$ (${\cal I}$ is a set of indices) is a set of $n\times n$ real matrices that describes the system,
and $x_0$ is the initial state. So, a trajectory is uniquely
defined by an \emph{initial condition} $x_0$ and a \emph{switching signal} $\sigma(t):\mathbb N \rightarrow {\cal I},$ which represents the sequence of matrices ruling the dynamics.\\
Switching systems constitute an important
family of hybrid systems. They have been the subject of a great research
effort in recent years (see \cite{liberzon-switching,jungers_lncis,Shorten05stabilitycriteria,sun-ge} for
general introductions and applications in systems and control).  Of particular importance in many applications is the \emph{worst-case stability} of these systems.  It is well known that their stability is ruled by the
\emph{joint spectral radius} of the set of matrices $\cM,$ which we now define:

For each $k=1,2,\ldots $, consider the set ${\mathcal
P}_k(\cM)$ of all possible products of length $k$ whose factors are
elements of $\cM$, that is $\P_k(\cM)=\{A_{i_1}\ldots 
A_{i_k}\ | \ i_1,\ldots,i_k \in {\cal I}\}$ and set
$\PF = \bigcup\limits_{k \ge 1} \P_k(\cM)$
to be the \emph{multiplicative
semigroup} associated with $\cM$.

\begin{definition}[Joint Spectral Radius -- jsr \cite{RoSt60}] If
$\|\cdot\|$ is any matrix norm on $\mathbb{R}^{\nxn}$, let
$$\widehat\rho_k(\cM):=\sup_{P\in \P_k(\cM)} \|P\|^{1/k},
\qquad k\in \N.$$  
The {\em joint spectral radius} (jsr) of $\cM$ is defined as
\begin{equation}\label{eq:JSR}
\widehat\rho(\cM)=\lim_{k\rightarrow \infty} \
\widehat\rho_k(\cM).
\end{equation}
\end{definition}

The joint spectral radius does not depend on the matrix norm chosen
thanks to the equivalence between matrix norms in finite dimensional
spaces.  It is a natural generalization of the classical notion of spectral radius of a matrix to a set of matrices. It is also known to be very hard to compute \cite{tsitsiklis97lyapunov}. Yet, it is of high importance in practice, because it characterizes the stability of switching systems:

\begin{thm}(e.g. \cite{jungers_lncis})
The system (\ref{switching-system}) converges to the origin for any initial point $x_0$ and any switching signal $\sigma$ if and only
if $$\rho(\cM)<1.$$
\end{thm}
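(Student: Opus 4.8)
The statement is the equivalence $\widehat\rho(\cM)<1$ (written $\rho(\cM)$ in the theorem) $\iff$ every trajectory converges to the origin, and I would prove the two implications by rather different means. Throughout I would fix a submultiplicative matrix norm $\|\cdot\|$ (legitimate by equivalence of norms in finite dimension) and set $R:=\sup_{i\in{\cal I}}\|A_i\|$, which is finite under the customary assumption that $\cM$ is finite (or at least bounded).

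\emph{Sufficiency of $\widehat\rho(\cM)<1$.} First I would note that $k\mapsto\log\bigl(\sup_{P\in\P_k(\cM)}\|P\|\bigr)$ is subadditive, so by Fekete's lemma the limit in \eqref{eq:JSR} is in fact an infimum; hence $\widehat\rho(\cM)<1$ provides an integer $K$ and a constant $\beta<1$ with $\|P\|\le\beta^{K}$ for every $P\in\P_K(\cM)$. Then, given any $x_0$ and any signal $\sigma$, I would write $x_t=Q_tx_0$ with $Q_t=A_{\sigma(t-1)}\cdots A_{\sigma(0)}$, split $t=qK+r$ with $0\le r<K$, factor $Q_t$ into $q$ consecutive length-$K$ blocks followed by one length-$r$ block, and bound $\|Q_t\|\le R^{\,r}\beta^{\,Kq}\le\max(1,R)^{K}\beta^{\,Kq}$. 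Since $q=\lfloor t/K\rfloor\to\infty$, this gives $\|x_t\|\le\|Q_t\|\,\|x_0\|\to0$, in fact uniformly over all switching signals.

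\emph{Necessity of $\widehat\rho(\cM)<1$.} This is the real work. Let $S=\{x:\|x\|=1\}$ and let $\Sigma={\cal I}^{\mathbb N}$, the space of switching signals, carry the product topology; since ${\cal I}$ is finite (or compact), $S\times\Sigma$ is compact, and for each fixed $t$ the evaluation $(x_0,\sigma)\mapsto x_t(x_0,\sigma)$ is continuous. The convergence hypothesis says that for every $(x_0,\sigma)$ there is $t\ge1$ with $\|x_t\|<\tfrac12$, so the open sets $U_t:=\{(x_0,\sigma):\|x_t(x_0,\sigma)\|<\tfrac12\}$ cover $S\times\Sigma$; compactness then yields a single $K$ such that \emph{every} trajectory issued from a unit vector has $\|x_t\|<\tfrac12$ for some $t\le K$. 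Restarting this bound from the renormalized state $x_{t_1}/\|x_{t_1}\|$ (with the shifted signal) and iterating, I would obtain times $1\le t_1<t_2<\cdots$ with $t_{j+1}\le t_j+K$ and $\|x_{t_j}\|<2^{-j}$; feeding in the crude estimate $\|x_t\|\le R^{\,t-t_j}\|x_{t_j}\|$ for $t\in[t_j,t_{j+1})$ and using $t_j\le jK$ yields a \emph{uniform} bound $\|Q_t\|\le2R^{K}\lambda^{t}$ with $\lambda:=2^{-1/K}<1$ for all $Q_t\in\P_t(\cM)$. Hence $\widehat\rho_t(\cM)\le(2R^{K})^{1/t}\lambda\to\lambda$, so $\widehat\rho(\cM)\le\lambda<1$, as required.

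The single delicate point — the one I would be most careful about — is the passage, in the necessity proof, from ``each individual trajectory converges'' to a \emph{uniform} decay rate: pointwise convergence alone does not produce the constant $K$, and it is precisely the compactness of $S\times\Sigma$ (hence the finiteness/boundedness of $\cM$) together with the restart-and-iterate argument that does. One cannot instead refute $\widehat\rho(\cM)\ge1$ by exhibiting products of growing norm, since when $\widehat\rho(\cM)=1$ — e.g.\ $\cM$ a single rotation matrix — all products have norm $1$ and yet a persistent, non-vanishing trajectory still exists.
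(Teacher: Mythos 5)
The paper does not actually prove this theorem: it is stated with an ``e.g.~\cite{jungers_lncis}'' citation and no argument is given, so there is no internal proof to compare against. Judged on its own, your proof is correct and is essentially the standard argument from the literature. The sufficiency direction is fine (you do not even need Fekete's lemma: since the limit in \eqref{eq:JSR} is $<1$, some $\widehat\rho_K(\cM)<1$ already follows from the definition of the limit, and the block factorization $t=qK+r$ then gives uniform geometric decay). The necessity direction is where the substance lies, and you identify and handle the delicate point correctly: pointwise convergence of every trajectory does not by itself give a rate, and your compactness argument on $S\times{\cal I}^{\mathbb N}$ (with the product topology, using that $x_t$ depends only on finitely many coordinates of $\sigma$, and that ${\cal I}$ is finite) legitimately produces the uniform horizon $K$, after which the restart-and-renormalize iteration yields $\|x_t\|\le 2\max(1,R)^K\,2^{-t/K}\|x_0\|$ and hence $\widehat\rho(\cM)\le 2^{-1/K}<1$. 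Your closing remark about $\widehat\rho=1$ (a single rotation) is exactly the right explanation of why one cannot shortcut this direction by exhibiting products of growing norm. Two cosmetic points only: the constant should be $\max(1,R)^K$ rather than $R^K$ (as you note in the sufficiency part but not in the necessity part), and you should either take $\|\cdot\|$ to be the operator norm induced by the chosen vector norm or invoke norm compatibility, so that the bound on $\|Q_tx_0\|$ over unit $x_0$ translates into a bound on $\|Q_t\|$; by norm-independence of $\widehat\rho$ this changes nothing. Note also that your argument uses finiteness (or compactness) of $\cM$, which matches the standing assumption of the paper but is genuinely needed for the compactness step.
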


One could come up with another natural generalization of the spectral radius to a set of matrices:
\begin{definition}[Generalized Spectral Radius -- gsr \cite{daub-lag}]\label{def:GSR}
Let $\rho(\cdot)$ denote the spectral radius of an $n\!
\times\! n $--matrix, consider
$\overline\rho_k(\cM):=\sup_{P\in \P_k(\cM)} \rho(P)^{1/k},
\qquad k\in \N$ and define the {\em generalized spectral radius} (gsr) of $\cM$ as
\begin{equation}\label{eq:GSR}
\overline\rho(\cM)=\limsup\limits_{k\rightarrow
\infty} \ \overline\rho_k(\cM).
\end{equation}
\end{definition}

In the case of bounded sets $\cM$ Daubechies and Lagarias conjectured the equality of gsr and jsr, which was then proven by Berger and Wang \cite{berger-wang}. From now on we shall restrict ourselves to finite sets of matrices, and we call \emph{the joint spectral radius} of a finite set $\cM$ the quantity

$$
\rho(\cM) \triangleq \widehat\rho(\cM)=\overline\rho(\cM).
$$
It is not difficult to see that for any bounded set of matrices,
\begin{equation}\label{eq-sandwich}
\forall\, k,p \quad \overline\rho(\cM)_k \leq \rho (\cM) \leq \widehat\rho_p(\cM).
\end{equation}

The joint spectral radius has found a great number of applications in several fields of Engineering, Mathematics, and Computer Science.  See \cite{jungers_lncis} for diverse applications.  Recently, several methods from very different flavors have been proposed to provide approximations of this quantity \cite{MM11,Shorten05stabilitycriteria,valcher-positive,ajprhscc11}. In particular, there has been a growing interest in the study of the so-called \emph{extremal norms} (also called Barabanov norms) \cite{morris-barabanov12,chitouretal12,jungersprotasov09}.  We now detail this idea.

\subsection{Extremal norms} 

An important characterization of the joint spectral radius $\rho(\mathcal{M})$ of
a matrix set is the generalization of the following well-known formula,
$\rho(A)=\inf_{\|\cdot\|\in {\rm Op}}\|A\|$,
where ${\rm Op}$ denotes the set of all possible operator norms.

In order to state this characterization, we define the norm of the set
$\mathcal{M}=\{A_i\}_{i\in \mathcal{I}}$ as
$$
\|\mathcal{M}\|=\hat{\rho }_1(\mathcal{M})=\max_{i\in \mathcal{I}}\|A_i\|.
$$
\begin{proposition}[see e.g. \cite{jungers_lncis}]
The joint spectral radius of a bounded set $\mathcal{M}$
of $n\times n$-matrices is given by
\begin{equation}
\rho(\mathcal{M})=\inf_{\|\cdot\|\in {\rm Op}}\|\mathcal{M}\|.
\label{c.s.r.}
\end{equation}
\label{proposition_csr}
\end{proposition}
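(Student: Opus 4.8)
**Proof proposal for Proposition \ref{proposition_csr} (the characterization $\rho(\mathcal{M})=\inf_{\|\cdot\|\in{\rm Op}}\|\mathcal{M}\|$).**

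The plan is to prove the two inequalities separately. For the easy direction, $\rho(\mathcal M)\le\inf_{\|\cdot\|\in{\rm Op}}\|\mathcal M\|$, I would fix any operator norm $\|\cdot\|$ and exploit submultiplicativity: for any product $P=A_{i_1}\cdots A_{i_k}\in\P_k(\mathcal M)$ we have $\|P\|\le\prod_{j=1}^k\|A_{i_j}\|\le\|\mathcal M\|^k$, hence $\widehat\rho_k(\mathcal M)=\sup_{P\in\P_k}\|P\|^{1/k}\le\|\mathcal M\|$ for every $k$; letting $k\to\infty$ in \eqref{eq:JSR} gives $\rho(\mathcal M)\le\|\mathcal M\|$, and taking the infimum over all operator norms finishes this half. (When $\mathcal M=\{A\}$ is a singleton this recovers the classical $\rho(A)=\inf_{\|\cdot\|}\|A\|$.)

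For the reverse inequality, $\rho(\mathcal M)\ge\inf_{\|\cdot\|\in{\rm Op}}\|\mathcal M\|$, I first dispose of the degenerate case: if $\rho(\mathcal M)=0$ one argues (via the sandwich \eqref{eq-sandwich} and a Fekete/subadditivity argument, or by noting all sufficiently long products have arbitrarily small norm) that the semigroup is bounded and nilpotent-like, and a suitable norm can be constructed directly; so assume $\rho:=\rho(\mathcal M)>0$ and, after rescaling $\mathcal M\mapsto\mathcal M/\rho'$ with $\rho'>\rho$ arbitrary, reduce to the case $\rho(\mathcal M)<1$, where I must produce an operator norm with $\|\mathcal M\|\le1$ (i.e. $\|A_i\|\le1$ for all $i$). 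By the characterization of $\rho<1$ (Theorem 1, or directly from \eqref{eq:JSR}) there is $C<\infty$ with $\|A_{i_1}\cdots A_{i_k}\|_2\le C$ for all products of all lengths $k\ge0$, i.e. the semigroup $\PF\cup\{I\}$ is bounded. Then define
\begin{equation}\label{eq:constructed-norm}
\|x\|_*:=\sup_{P\in\PF\cup\{I\}}\|Px\|_2.
\end{equation}
This supremum is finite (bounded by $C\|x\|_2$), it is clearly a norm (homogeneity and triangle inequality are inherited from $\|\cdot\|_2$; positivity holds since the identity is included, giving $\|x\|_*\ge\|x\|_2$), and for every generator $A_i$ and every $x$ one has $\|A_ix\|_*=\sup_{P}\|PA_ix\|_2\le\sup_{Q\in\PF\cup\{I\}}\|Qx\|_2=\|x\|_*$, because $PA_i$ again ranges over a subset of $\PF$. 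Hence the operator norm induced by $\|\cdot\|_*$ satisfies $\|A_i\|_*\le1$ for all $i$, so $\|\mathcal M\|_*\le1$; undoing the rescaling gives an operator norm with $\|\mathcal M\|\le\rho'$ for every $\rho'>\rho$, whence $\inf_{\|\cdot\|\in{\rm Op}}\|\mathcal M\|\le\rho(\mathcal M)$.

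The main obstacle is the well-definedness (finiteness) of the supremum in \eqref{eq:constructed-norm}, which is exactly equivalent to boundedness of the semigroup generated by the rescaled $\mathcal M$; this is where the hypothesis $\rho(\mathcal M)<1$ (equivalently the content of Theorem 1, or a direct Fekete-lemma argument on $\log\widehat\rho_k$) is essential, and it is also the only place the finiteness/boundedness of $\mathcal M$ is used. Everything else — that $\|\cdot\|_*$ is genuinely a norm and that it is contractive for each $A_i$ — is a routine verification. One should also remark that the infimum in \eqref{c.s.r.} is generally not attained (an extremal norm need not exist), so the statement is about the infimum only; this matches the motivation given for the subsequent discussion of extremal/Barabanov norms.
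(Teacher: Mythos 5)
Your proof is correct. The paper does not prove Proposition \ref{proposition_csr} at all --- it is quoted from the literature (the classical Rota--Strang characterization, see \cite{RoSt60,jungers_lncis}) --- and your argument is exactly the standard one: submultiplicativity of any operator norm for the inequality $\rho(\mathcal{M})\leq\|\mathcal{M}\|$, and, for the converse, rescaling to $\rho<1$, boundedness of the rescaled semigroup, and the induced norm $\|x\|_*=\sup_{P\in\PF\cup\{I\}}\|Px\|_2$, which is contractive for every generator. One small remark: the separate treatment of the case $\rho(\mathcal{M})=0$ is unnecessary, since for any $\rho'>0$ the same rescaling $\mathcal{M}/\rho'$ already has joint spectral radius $<1$, so the general construction applies verbatim and yields $\inf_{\|\cdot\|\in{\rm Op}}\|\mathcal{M}\|\leq\rho'$ for every $\rho'>0$; apart from this cosmetic point, the argument is complete.
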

We remark that the right-hand side of (\ref{c.s.r.}) is often referred to as
the common spectral radius of the set $\mathcal{M}$.

Given a set $\mathcal{M}$, it is important to establish if
the infimum in (\ref{c.s.r.}) is indeed a minimum.
We have the following definition.
\begin{definition}[Extremal norm]
A norm \ $\|\cdot\|_*$ satisfying the condition
\begin{equation}
\|\mathcal{M}\|_*=\rho(\mathcal{M})
\label{6}
\end{equation}
is said to be extremal for the set $\mathcal{M}$.
\label{def_extremality}
\end{definition}
For sets of matrices with jsr equal to one, a sufficient condition for the existence of an extremal norm is that
the product semigroup generated by $\cM$ is bounded and the set is irreducible.

An extremal norm is a norm whose unit ball is invariant under the action of the matrices in the set.  This confers a practical interest to the analysis of such objects: if one manages to generate a norm whose unit ball is an invariant set, then this directly allows to bound the jsr: $\widehat \rho \leq 1.$ \begin{remark}We consider the particular question $\widehat \rho \leq 1$ without loss of generality because one can divide all the matrices by a constant $r$ and ask whether the newly obtained set has joint spectral radius smaller than one.  By homogeneity of the Definition \ref{def:GSR}, the minimal $r$ such that the answer is yes provides the actual value of the joint spectral radius. This is the strategy that we will follow in this paper.\end{remark}  Methods based on the generation of an invariant polytope have been proposed in \cite{protasov1,GuWiZe05,GuZe09,GuZe08}.  A great advantage of these methods is that they can stop in finite time, and provide an exact computation of the jsr of the set of matrices.  Indeed, if, besides the knowledge of an invariant polytope, one knows a product whose spectral radius is equal to one, then (by Eq. (\ref{eq-sandwich})) one has a proof that the exact value of the jsr is one.

\subsection{Invariant cones}
Along an other line of research, methods have been proposed, which base their analysis on the existence of a common invariant cone for the set of matrices, or which first transform the set of matrices so that the new matrices share an invariant cone (\cite{blondel-kron,jungers-protasov-blondel09,parrilo-jadbabaie,protasov-laa-2010,jungers-laa12}). In this paper, by \emph{cone} we mean a convex, closed, pointed, and nondegenerate cone.
Let $\cK \subset \rn$ be a cone with the apex at the origin (see e.g. \cite{BoyVan04}). Any
such cone defines a partial order in $\rn$: we write $x \geq_\cK y \;(x >_\cK y)$ for $x-y \in \cK \; ( x-y \in\inter\cK )$.

The cone $\cK$ is an invariant cone for the matrix $A$ if $A\cK \subset \cK$. In this case we say that $A$ is nonnegative and write $A \geq_\cK 0$. If $\cK$ is invariant for all matrices of some set $\cM$, then it is said to be an invariant cone for that set.
%
We now present a simple algebraic operation, that enables one to construct a new set of matrices without essentially changing the JSR. Moreover it turns out that for any set of matrices $\cm,$ the newly constructed set has an invariant cone: the set $\mathcal S^n_{+}$ of semidefinite positive matrices.
\begin{definition}Given a vector $v\in \co^n,$ its \emph{semidefinite lifting} is defined as $$\widetilde v=\RE(v\cdot v^*).$$ Also, the \emph{semidefinite lifting} of a matrix is the linear operator:
$$\widetilde A: \cS^n_+ \rightarrow \cS^n_+, \quad \widetilde A \widetilde v = \widetilde{Av}. $$
\end{definition}
We will note $a\preceq b$ (resp. $a\succeq b$) for $a \leq_{ \cS^n_+ }b$ (resp. $a \geq_{ \cS^n_+ }b$).

The following important result relates the spectral radii of a set of matrices and
the corresponding lifted set \cite{jungers-protasov-blondel09}.
\begin{thm} \label{th:equivalence}
The following equality holds true for any bounded set of matrices $\cM$:
$$\rho(\widetilde \cm)=\rho(\cm)^2.$$
\end{thm}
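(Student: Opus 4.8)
The plan is to relate products in the lifted semigroup directly to squares of norms of products in the original semigroup, and then take limits. First I would observe that the semidefinite lifting is \emph{multiplicative} on products: for any matrices $A,B$ and any vector $v$, one has $\widetilde{A}\,\widetilde{B}\,\widetilde{v} = \widetilde{A}\,\widetilde{Bv} = \widetilde{ABv} = \widetilde{AB}\,\widetilde v$, and since the rank-one lifts $\widetilde v = \RE(vv^*)$ linearly span $\cS^n$ (indeed they span it over $\co^n$), this forces $\widetilde{A}\,\widetilde B = \widetilde{AB}$ as linear operators on $\cS^n$. Hence for a product $P = A_{i_1}\cdots A_{i_k}\in\P_k(\cM)$ we get $\widetilde P = \widetilde{A_{i_1}}\cdots\widetilde{A_{i_k}}\in\P_k(\widetilde\cM)$, and conversely every element of $\P_k(\widetilde\cM)$ arises this way. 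So it suffices to compare $\|\widetilde P\|$ with $\|P\|^2$.

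Next I would fix convenient norms to make the comparison sharp. On $\co^n$ use the Euclidean norm, and on $\cS^n$ use a norm for which the operator norm of $\widetilde P$ is controlled by $\|P\|_2^2$ up to dimension-dependent constants — for instance, norm a symmetric matrix $S$ by $\|S\| = \max_{\|x\|_2=1}|x^\top S x|$ (the numerical radius, equivalent to the spectral norm on $\cS^n$). For $S = \widetilde v = \RE(vv^*)$ one computes $\|\widetilde v\| \asymp \|v\|_2^2$. Then $\|\widetilde P \widetilde v\| = \|\widetilde{Pv}\| \asymp \|Pv\|_2^2 \le \|P\|_2^2\,\|v\|_2^2 \asymp \|P\|_2^2\,\|\widetilde v\|$, giving $\|\widetilde P\| \le C\,\|P\|_2^2$; and choosing $v$ to nearly attain $\|P\|_2$ gives the matching lower bound $\|\widetilde P\| \ge c\,\|P\|_2^2$, where $c,C>0$ depend only on $n$. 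Thus $c\,\|P\|_2^2 \le \|\widetilde P\| \le C\,\|P\|_2^2$ for every $P\in\PF$.

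Now I would take $k$-th roots and let $k\to\infty$. From $\widehat\rho_k(\widetilde\cM) = \sup_{P\in\P_k(\cM)}\|\widetilde P\|^{1/k}$ and the two-sided bound, $c^{1/k}\bigl(\widehat\rho_k(\cM)^{2k}\bigr)^{1/k}\cdot\text{(wait, reindex)}$ — more cleanly, $\sup_{P\in\P_k}\|\widetilde P\|^{1/k}$ lies between $c^{1/k}\bigl(\sup_{P\in\P_k}\|P\|_2^{2}\bigr)^{1/k} = c^{1/k}\,\widehat\rho_k(\cM)^2$ and $C^{1/k}\,\widehat\rho_k(\cM)^2$. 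Since $c^{1/k},C^{1/k}\to 1$ and $\widehat\rho_k(\cM)\to\rho(\cM)$, letting $k\to\infty$ yields $\rho(\widetilde\cM) = \rho(\cM)^2$. (Equivalently one could run the same estimate through the generalized spectral radius using $\rho(\widetilde P) = \rho(P)^2$, which follows because the eigenvalues of $\widetilde A$ acting on $\cS^n$ are the products $\lambda_i\bar\lambda_j$ of eigenvalues of $A$; this avoids the norm constants entirely, at the cost of invoking the Berger--Wang equality $\widehat\rho=\overline\rho$.)

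The main obstacle is the first step: verifying carefully that $\widetilde A$ is well-defined as a linear operator on all of $\cS^n$ (not just on rank-one lifts) and that $\widetilde{(\cdot)}$ is a semigroup homomorphism. This rests on the fact that $\{\RE(vv^*) : v\in\co^n\}$ spans $\cS^n$ over $\re$ — which one checks by noting $\RE((x+y)(x+y)^*) - \RE(xx^*) - \RE(yy^*) = xy^\top + yx^\top$ and $\RE((x+iy)(x+iy)^*) = xx^\top + yy^\top - (xy^\top - yx^\top)\cdot 0 + \dots$ (the real-part bookkeeping must be done correctly) — together with linearity of $A\mapsto Av$, so that $\widetilde A$ extends uniquely and multiplicativity is inherited. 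Everything after that is the routine norm-equivalence and limit argument sketched above.
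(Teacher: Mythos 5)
The paper does not actually prove this statement: Theorem \ref{th:equivalence} is imported from the cited reference \cite{jungers-protasov-blondel09}, so there is no in-paper proof to compare against. Judged on its own, your argument is the standard one and is essentially correct: the lifting is a semigroup homomorphism, so every element of $\P_k(\widetilde\cM)$ is the lift $\widetilde P$ of some $P\in\P_k(\cM)$, and a two-sided comparison of $\|\widetilde P\|$ with $\|P\|^2$ followed by $k$-th roots gives the result; the alternative route through $\rho(\widetilde P)=\rho(P)^2$ and Berger--Wang also works.

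Two points should be tightened. First, your upper bound $\|\widetilde P\|\le C\|P\|_2^2$ is only verified on rank-one lifts $\widetilde v$, but the operator norm of $\widetilde P$ on $\cS^n$ is a supremum over the whole unit ball, and values on a spanning set do not control it by themselves. The fix is immediate once you use the explicit formula for the lifting of a real matrix, $\widetilde P X = P X P^{\top}$ (which also settles the well-definedness and linearity you worry about at the end, with no polarization bookkeeping needed): submultiplicativity of the spectral norm gives $\|PXP^{\top}\|\le\|P\|_2^2\|X\|$ for \emph{all} symmetric $X$, and taking $X=vv^{\top}$ with $v$ a maximizer of $\|Pv\|$ gives $\|\widetilde P\|=\|P\|_2^2$ exactly, so no constants $c,C$ are needed at all. (Alternatively, decompose an arbitrary symmetric $X$ spectrally into at most $n$ rank-one terms, at the cost of a factor $n$, which still washes out in the limit.) Second, a minor slip: for the real symmetric lifting $X\mapsto AXA^{\top}$ the eigenvalues are the products $\lambda_i\lambda_j$, $i\le j$, of eigenvalues of $A$ (the form $\lambda_i\bar\lambda_j$ belongs to the Hermitian lifting $X\mapsto AXA^*$); either way $\rho(\widetilde A)=\rho(A)^2$, so the eigenvalue route goes through unchanged.
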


Thus, if one wants to compute the joint spectral radius of $\cm,$ he can as well compute it for the transformed set $\widetilde \cm.$
\\
Our goal is to exploit the invariant cone property of the lifted set
in order to compute the joint spectral radius by determining an extremal norm for
$\widetilde \cM.$  As we will see, it will be sufficient to find an invariant set inside the invariant cone.  A convenient way of describing such a set is given in the following definition.  It is reminiscent of the convex hull operation, but, besides being defined by a set of points, it also depends on the geometry of an underlying cone.

\begin{definition}[Conical convex hull and Conitope] Given a cone $K$ and a (possibly infinite) set of vectors $U\subset K$, 
its \emph{conical convex hull} is defined as 
$$\coni(U) \triangleq \{x \in K: \exists\, y\in \conv{(U)}: x\leq_\cK y \}.$$

We say that a set $S\subset K$  is a \emph{conitope} if there is a 
\emph{finite set} $U = \{ u_j \}_{j=1}^{\ell}$ such that
\begin{eqnarray}\nonumber S&= &\conitope{(U)} \triangleq \{x \in K: \exists\, \lambda_1,\dots, \lambda_\ell,\lambda_i\geq 0, \sum\lambda_i=1:\\ &&
 x\preceq \lambda_1 u_1 +\cdots + \lambda_\ell u_\ell \}. \end{eqnarray}
\end{definition}
Conitopes can in turn define a norm on the corresponding cone in the following way.


\begin{lemma} \label{lem-valid-norm}
Let $\cK$ be a cone. Let $U$ be a set of points in $\cK,$ such that $\conv{(U)}\cap \inter \cK \neq \emptyset.$  Then, for any $x\in \cK,$ $|x|_U \triangleq \inf\{\lambda>0:\ x/ \lambda\in \conitope(U)\}$ is a valid norm on $K.$ We call such norms \emph{conitope norms.}
\end{lemma}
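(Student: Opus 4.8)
The plan is to show that the Minkowski (gauge) functional $|\cdot|_U$ of the set $C:=\conitope(U)$ is a norm on $\cK$ — meaning that, on $\cK$, it is finite, positively homogeneous, subadditive, and vanishes only at $0$ — by first recording four geometric features of $C$: it is convex, it contains the origin, it contains a relative neighbourhood of the origin inside $\cK$, and it is bounded. Once these are in place the norm axioms follow by the standard gauge-functional reasoning. Throughout one uses that $\conv{(U)}\subset\cK$, since $\cK$ is convex and $U\subset\cK$; in particular $0\in C$ and $|0|_U=0$.

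I would first check convexity of $C$. If $x_i\in\cK$ satisfy $x_i\leq_\cK y_i$ with $y_i\in\conv{(U)}$ for $i=1,2$, then for $\theta\in[0,1]$ one has $\theta x_1+(1-\theta)x_2\in\cK$ and $\theta x_1+(1-\theta)x_2\leq_\cK\theta y_1+(1-\theta)y_2\in\conv{(U)}$, because $\leq_\cK$ is preserved under nonnegative combinations; hence $C$ is convex. The triangle inequality then follows in the usual way: from $x/\lambda\in C$ and $y/\mu\in C$, convexity gives $(x+y)/(\lambda+\mu)\in C$ (and $x+y\in\cK$), so $|x+y|_U\leq\lambda+\mu$, and taking infima yields $|x+y|_U\leq|x|_U+|y|_U$. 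Positive homogeneity $|tx|_U=t\,|x|_U$ for $t>0$ is immediate from the definition.

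Next comes finiteness, which is exactly where the hypothesis $\conv{(U)}\cap\inter\cK\neq\emptyset$ is used. Fixing $\bar y$ in that intersection, since $\bar y\in\inter\cK$ there is $\varepsilon>0$ with $B(\bar y,\varepsilon)\subset\cK$; hence for every $z\in\cK$ with $\|z\|<\varepsilon$ we get $\bar y-z\in\cK$, i.e.\ $z\leq_\cK\bar y$, so $z\in C$. Thus $\cK\cap B(0,\varepsilon)\subset C$, and for any $x\in\cK$ we have $tx\in C$ for small $t>0$, whence $|x|_U\leq 1/t<\infty$. For positive definiteness one uses boundedness of $C$: since $\cK$ is closed and pointed, its order intervals $[0,y]_\cK:=\cK\cap(y-\cK)$ are bounded, uniformly for $y$ in the bounded set $\conv{(U)}$ (for a conitope $U$ is finite, so $\conv{(U)}$ is automatically bounded); thus there is $R$ with $C\subset\bigcup_{y\in\conv{(U)}}[0,y]_\cK\subset B(0,R)$. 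Then $|x|_U=0$ forces $\lambda_k\to 0^+$ with $x/\lambda_k\in C$, so $\|x\|\leq R\lambda_k\to0$ and $x=0$.

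This establishes the four axioms, i.e.\ that $|\cdot|_U$ is a norm on $\cK$; if one wants in addition the identity $\{x\in\cK:|x|_U\leq1\}=C$, one invokes closedness of $C$, which follows from closedness of $\conv{(U)}$ (true for finite $U$) and of the graph of $\leq_\cK$, but this is not needed for the lemma. One may also extend $|\cdot|_U$ to a norm on $\mathrm{span}(\cK)=\rn$ by taking the gauge of $\conv\big(C\cup(-C)\big)$, a symmetric bounded convex body whose interior contains $0$ because $C$ contains a relative neighbourhood of $0$ in the nondegenerate cone $\cK$. The only non-formal ingredient in the whole argument — and the step I expect to need the most care — is the boundedness of $C$, which rests on the boundedness of the order intervals of $\cK$, i.e.\ precisely on the closed/pointed structure of the cone; everything else is routine manipulation of gauge functionals.
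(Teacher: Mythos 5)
Your proof is correct. A preliminary remark: the paper states Lemma \ref{lem-valid-norm} without any proof (only Theorem \ref{thm-conv}, Lemma \ref{lem-conv2zero} and Theorem \ref{thm-as} are proved, in the appendix or the extended version), so there is no authors' argument to compare against; judged on its own, your gauge-functional argument is the natural one and every step checks out. Convexity of $\coni(U)$ together with $0\in\coni(U)$ gives subadditivity and positive homogeneity of the Minkowski functional; the point $\bar y\in\conv(U)\cap\inter\cK$ yields $\cK\cap B(0,\varepsilon)\subset\coni(U)$ and hence finiteness of $|\cdot|_U$ on $\cK$; and positive definiteness follows from boundedness of $\coni(U)$, which, as you say, is the only non-routine ingredient: it holds because order intervals of a closed, pointed cone in $\rn$ are bounded uniformly over bounded upper endpoints (concretely, pick $f$ in the interior of the dual cone, so $f(x)\geq m\|x\|$ on $\cK$ for some $m>0$, and bound $\|x\|\leq \sup_{y\in\conv(U)}f(y)/m$ for $0\leq_\cK x\leq_\cK y$). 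One point worth stating explicitly: this boundedness step genuinely requires $\conv(U)$ to be bounded (finite $U$ as in the paper's definition of a conitope, or the bounded infinite sets that arise in the proof of Theorem \ref{thm-conv}); as literally stated, with $U$ an arbitrary subset of $\cK$, the lemma fails --- for instance in $\cK=\mathbb{R}^2_+$ with $U=\{(n,1/n):n\geq 1\}\cup\{(1,1)\}$ one gets $|(1,0)|_U=0$ although $(1,0)\neq 0$ --- so your proof has in fact isolated the implicit hypothesis the paper relies on. Your closing observation that "norm on $\cK$" can be upgraded to the restriction of a genuine norm on $\rn$ by taking the gauge of $\conv\bigl(\coni(U)\cup(-\coni(U))\bigr)$ is also correct and is the right way to make the statement precise.
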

We present a last definition which characterizes minimal sets of vertices defining a conitope.
\begin{definition}[Essential system]
We say that $U\subset K$ is an \emph{essential system of vertices} if for any $U'\subset U,\, U'\neq U,$ $$\coni(U')\neq \coni(U). $$
\end{definition}
\section{Plan}
 In this work, we show how to combine the two above mentioned approaches: our goal is to find conitope norms which are extremal for a given set of matrices.  We will show that this combination allows to derive innovative efficient algorithms for computing the joint spectral radius.  As a byproduct, the same ideas also provide an approximation algorithm for the joint spectral subradius of a set of matrices.  However, we focus here on the joint spectral radius computation for the sake of clarity.
 In the next section, we present the algorithms that unify the two approaches.  Then, in Section \ref{sec-thms}, we prove that the algorithms are correct and converge to the required value.  In Section \ref{sec-finiteness}, we exhibit a large family of sets of matrices for which our algorithms provably converge in finite time. In section \ref{section-examples} we give evidence for the good performance of our method on numerical examples. In the conclusion, we point out open questions and generalizations that naturally come up from our methods.

\section{The algorithms}

\subsection{Computing the joint spectral radius} 

We start by giving the following useful definition.
\begin{definition}[Spectrum-maximizing product]
If $\cM$ is a bounded set of complex $n\times n$-matrices, any
matrix $A\in \cP_{t}(\cM)$ satisfying 
\begin{equation}
\rho(A)^{1/t}=\bar{\rho}_{t}(\cM)=\rho(\cM).
\label{eq:smp}
\end{equation}
for some $t\geq 1$ is called a
spectrum-maximizing product (SMP) for $\cM$.
\label{definitionSMP}
\end{definition}

The procedure of Algorithm $1$ can be summarized in three steps, which
are analogous to those used in the classical polytope 
algorithms (\cite{GuWiZe05,GuZe08,GuZe09}).
The first step is to look for a candidate spectrum maximizing product
$A \in \cP_{t}(\cM)$ for some $t\ge 1$ 
and to scale the set as
$$\cM' = \cM/\rho(A)^{1/t},$$ 
which satisfies $\rho(\cM') \ge 1$.

The second step is to look for an invariant convex set for 
the lifted set $\widetilde {\cM'}$, i.e., the unit ball of 
a conic extremal norm.

If the procedure succeeds, then we can conclude that 
$\rho(\cM') = 1$, that is, the jsr of our initial set of matrices is actually equal to $\rho(A)^{1/t}.$

Some escaping criteria to detect that $A$ is not an SMP, can be used in 
analogy to those used for classical polytope algorithms (see e.g. \cite{ciconeetal09}, \cite{guglielmiprotasov}).

Algorithm $2$ builds an extremal conitope in a dynamic manner, without reinitializing the algorithm, and pruning the products to consider thanks to the partial order implied by the cone $\cS^n_+.$

\subsection{Two algorithms} 
 We present here our two algorithms. They both generate an upper bound $Y$ and a lower bound $C$ which converge towards the joint spectral radius.

We denote as $\mbox{L.E.}(A)$ the leading eigenvector of a matrix $A$, implicitely assuming this
is unique. In the case it is not unique we will specify this.

\textbf{Algorithm 1: the conitope method}

{\bf\sc Algorithm 1:}

\begin{itemize}
\item {\bf  Preprocessing:}
\item Find a product $A$ of length $t$ such that $\rho(A)^{1/t}$ is large (if needed, perform an exhaustive search for increasingly long products)
\item $C=\rho(A)^{1/t}$
\item $v_0:= \mbox{L.E.}(A) $
\item $U_0:=\widetilde v_0 $
\item $\cm':= \cm/C$  
\begin{itemize}
\item while $\mathcal \rmj{\mbox{int }S_{n+}\cap \mbox{span}(U_0)= \emptyset}$
\begin{itemize}
\item Let $U_0$ be an essential system of vertices of $\conitope (U_0\cup \widetilde{\cm'} U_0)$
\end{itemize}
\item $U := U_0$
\end{itemize}
\item {\bf Main loop}
\begin{itemize}
\item Repeat until $B=1$
\begin{itemize}
\item $W:= \widetilde{\cm'} U$
\item $B:= \max{\{|w|_U:w\in W\}},\, Y=B\cdot C$
\item Let $U$ be an essential system of vertices of $\conitope (U\cup W)$
\item $D:= \max{\{|w|_U:w\in U_0\}}$
\item If $D<1$
\begin{itemize}
\item $A$ is not an SMP: go back to preprocessing
\end{itemize}
\end{itemize}
\item Return $\conitope (U)$ as an invariant conitope.
\end{itemize}
\item End
\end{itemize}

\textbf{Algorithm 2: the dynamical procedure}

We now present a modification of the algorithm which is allowed by the fact that the matrices share a common invariant cone.  The idea is that the cone induces a partial order relation between points, which allows us to disgard some of them, if they are dominated by another point.\\
This algorithm is a dynamical version of the previous one: while trying to prove that a certain product $A$ is an SMP, at the same time it tries and finds another better product $A'.$ Once it finds such a product, the algorithm simply continues its computation, except that the stored points in $V$ are scaled as if the algorithm had started with $A'.$

{\bf\sc Algorithm 2:}
\begin{itemize}
\item {\bf  Preprocessing:}
\item $C=\max{\{\rho(A):A\in \cm\}}$
\item $\cm = \cm/C$

\item $U_0:= I $

\item $U := U_0$
\item {\bf Main loop}
\begin{itemize}
\item Repeat until $B= 1$
\begin{itemize}
\item $W:= \widetilde \cm U$
\item $B:= \max{\{|w|_U:w\in W\}},$ $Y=B\cdot C$
\item Let $U$ be an essential system of vertices of $\conitope (U\cup W)$

\item \label{item-subproduct} $L=\max_{A}{\{\rho(A)^{1/l(A)}:A\in f(z), z\in W\}}$  \emph{(  $f(z)$ is the set of subproducts of the product that maps $U_0$ on $z.$  $l(A)$ is the length of the product $A$.)}
    \item if $L>1$
\begin{itemize}
\item $C:=L\cdot C$
\item $\widetilde \cm := \widetilde \cm /L$
\item For all $u\in U:$ $u:=u/(C^{l(u)})$ \emph{($l(u)$ is the length of the product 
$\widetilde A$ such that $\widetilde A U_0 = u.$)}
 
\end{itemize}
\end{itemize}
\item Return $\conitope (U)$ as an invariant conitope.
\end{itemize}
\end{itemize}

\begin{remark}
The algorithms differ by two facts: first, in the initialization part, the first one starts with a leading eigenvector of a candidate SMP, and the second one with the identity matrix; second (and mainly), in the first algorithm, one reinitializes the conitope, by restarting from the new leading eigenvector, while the second algorithm keeps \emph{all the previous vertices} and continues with them.  These two modifications are independent from each other, but the main idea is that in the first algorithm, starting with a leading eigenvector enables us to stop in finite time (indeed, it is known that the leading eigenvectors of the permutations of the SMP are all on the boundary of any extremal norm).  For the second algorithm, we start with the identity matrix because we do not really hope to converge in finite time, but we are more interested in obtaining rapidly converging bounds than by a finite time termination.

Finally, in algorithm $1$ as presented here, we do not search for a better spectral radius among all the subproducts in $f(z)$ for all $z\in W,$ like we do in algorithm $2.$  However, one could perform this search as well in Algorithm $1$, which would allow to potentially find a better SMP without having to resort to an exhaustive search when coming back to the preprocessing phase.
\end{remark}

\section{Bounds and convergence results}\label{sec-thms}
%
%
In this section, we show that our methods are correct, which means that they provide valid lower and upper bounds on the joint spectral radius, and that moreover, these bounds converge towards the actual value asymptotically.
\begin{proposition}\label{prop-upperbound}
Let $\widetilde \cm$ be a set of matrices that share an invariant cone $\cK.$ Let $U$ be a set of points in $\cK,$ such that $\conv{(U)}\cap \inter \cK \neq \emptyset.$  Then, we have the following valid upper bound on the joint spectral radius: \begin{equation} \label{eq-prop-upperbound} \rho(\widetilde \cm)\leq \widehat \rho_U(\widetilde \cm),\end{equation}
where \begin{equation} \label{eq-prop-upperbound-def}\widehat \rho_U(\widetilde \cm)=\sup_{u\in U}{\sup_{\widetilde A\in \widetilde  \cm}{\{|\widetilde Au|_U\}}} .\end{equation}
\end{proposition}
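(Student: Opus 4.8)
The plan is to show that the conitope norm $|\cdot|_U$ makes every matrix of the lifted set a map of $|\cdot|_U$-operator norm at most $\widehat\rho_U(\widetilde\cM)$ on $\cK$, and then to convert this into a bound on the joint spectral radius. So the first step is to record, via Lemma~\ref{lem-valid-norm}, that the hypothesis $\conv(U)\cap\inter\cK\neq\emptyset$ guarantees that $|\cdot|_U$ is a genuine norm on the cone $\cK$ — in particular positively homogeneous and subadditive on $\cK$ — and, crucially, that it is monotone for the order $\leq_\cK$: if $0\leq_\cK a\leq_\cK b$ then $|a|_U\leq|b|_U$, because $\conitope(U)$ is downward closed (from $a\leq_\cK b\leq_\cK y\in\conv(U)$ one gets $a\in\conitope(U)$). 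The target is then the single inequality $|\widetilde A x|_U\leq\widehat\rho_U(\widetilde\cM)\,|x|_U$ for every $x\in\cK$ and every $\widetilde A\in\widetilde\cM$.

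To obtain that inequality, I would fix $\widetilde A\in\widetilde\cM$ and, by homogeneity, reduce to $x\in\conitope(U)$, i.e. $x\preceq\sum_j\lambda_j u_j$ with $u_j\in U$, $\lambda_j\geq0$, $\sum_j\lambda_j=1$. Since $\cK$ is invariant for $\widetilde A$, the linear map $\widetilde A$ preserves $\preceq$, so $\widetilde A x\preceq\sum_j\lambda_j\widetilde A u_j$, all terms lying in $\cK$. By the very definition of $\widehat\rho_U(\widetilde\cM)$ one has $|\widetilde A u_j|_U\leq\widehat\rho_U(\widetilde\cM)$ for each $j$, i.e. $\widetilde A u_j\preceq\widehat\rho_U(\widetilde\cM)\,z_j$ with $z_j\in\conv(U)$; forming the $\lambda$-combination and using transitivity of $\preceq$ gives $\widetilde A x\preceq\widehat\rho_U(\widetilde\cM)\sum_j\lambda_j z_j$, and $\sum_j\lambda_j z_j$ is again in $\conv(U)$. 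Hence $\widetilde A x\in\widehat\rho_U(\widetilde\cM)\,\conitope(U)$, that is $|\widetilde A x|_U\leq\widehat\rho_U(\widetilde\cM)$. (When $U$ is infinite and $\conitope(U)$ fails to be closed, a routine $\lambda\to1^+$ limiting argument, or replacing $U$ by a set with the same conical convex hull, patches this.)

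For the last step I would iterate the inequality along a product $\widetilde P=\widetilde A_{i_1}\cdots\widetilde A_{i_k}\in\P_k(\widetilde\cM)$, which again leaves $\cK$ invariant, to get $|\widetilde P x|_U\leq\widehat\rho_U(\widetilde\cM)^k\,|x|_U$ on $\cK$. Since $\cK$ is a proper cone, the Perron--Frobenius theorem for cone-preserving maps provides $v\in\cK\setminus\{0\}$ with $\widetilde P v=\rho(\widetilde P)\,v$; substituting and dividing by $|v|_U>0$ yields $\rho(\widetilde P)\leq\widehat\rho_U(\widetilde\cM)^k$, hence $\overline\rho_k(\widetilde\cM)\leq\widehat\rho_U(\widetilde\cM)$ for all $k$, and letting $k\to\infty$ gives $\rho(\widetilde\cM)=\overline\rho(\widetilde\cM)\leq\widehat\rho_U(\widetilde\cM)$. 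An alternative that avoids Perron--Frobenius is to extend $|\cdot|_U$ to an honest norm on the ambient space of symmetric matrices — using that $\cK$ is solid, e.g. $\|z\|:=\inf\{\,|a|_U : a\in\cK,\ -a\preceq z\preceq a\,\}$, which agrees with $|\cdot|_U$ on $\cK$ and is a norm because $\cK$ is generating — then checking that the induced operator norm satisfies $\|\widetilde\cM\|\leq\widehat\rho_U(\widetilde\cM)$ by exactly the same order-preservation argument and concluding by Proposition~\ref{proposition_csr}. I expect the main obstacle to be precisely this passage from a norm living only on the cone to a bound on the joint spectral radius of operators acting on the full space; the rest is elementary cone arithmetic, with the only technical nuisance being the possible non-closedness of $\conitope(U)$ when $U$ is infinite.
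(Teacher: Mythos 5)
Your main argument is correct and is, at its core, the same proof as the paper's: both rest on (i) the observation that cone invariance makes each $\widetilde A$ order preserving, so the vertex bound $|\widetilde A u|_U\le\widehat\rho_U(\widetilde\cm)$ propagates to the whole conical hull and hence iterates along products, and (ii) the generalized Perron--Frobenius theorem, which supplies an eigenvector inside $\cK$ on which the conitope norm can be evaluated. The only real difference in the main route is the order of quantifiers: the paper fixes $\epsilon>0$, uses the generalized spectral radius definition to pick one product $\widetilde A$ of some length $t$ with $\rho(\widetilde A)\ge(\rho(\widetilde\cm)-\epsilon)^t$, applies Perron--Frobenius to that single product, normalizes $|u|_U=1$ and lets $\epsilon\to 0$; you instead prove the uniform estimate $|\widetilde P x|_U\le\widehat\rho_U(\widetilde\cm)^k|x|_U$ on $\cK$ for every length-$k$ product, apply Perron--Frobenius to each, and let $k\to\infty$ through $\overline\rho_k$. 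These are the same proof up to rearrangement, though your write-up has the merit of making explicit the containment $\widetilde A\,\coni(U)\subset\mu\,\coni(U)$ for $\mu>\widehat\rho_U(\widetilde\cm)$ (with the limiting device handling non-attainment of the infimum), which the paper asserts in a single clause. The genuinely different element is your alternative ending: extending $|\cdot|_U$ to the ambient space by $\|z\|=\inf\{|a|_U:\ a\in\cK,\ -a\le_\cK z\le_\cK a\}$ (legitimate because the cone is closed, pointed and solid, hence generating and, in finite dimension, normal) turns the one-step estimate into a genuine operator-norm bound $\|\widetilde\cm\|\le\widehat\rho_U(\widetilde\cm)$ and concludes by Proposition~\ref{proposition_csr}, avoiding Perron--Frobenius entirely; the paper's route is shorter, while yours makes transparent that $\widehat\rho_U$ dominates an induced operator norm, which is closer in spirit to the extremal-norm viewpoint of the paper.
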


\begin{proof} By Equation (\ref{eq:GSR}), for any $\epsilon >0,$ there exists a $t$ and a product $\widetilde A\in \cP(\widetilde  \cm)$ such that $\rho(\widetilde A)\geq (\rho(\widetilde \cm)-\epsilon)^t$.  
Since the matrices in $\widetilde \cm$ share an invariant cone $\cK,$ by the generalized Perron-Frobenius Theorem, there exists a vector $u\in \cK$ such that $\widetilde Au = (\rho(\widetilde \cm)-\epsilon)^t u. $ Let us take the vector $u$ such that $|u|_U=1.$\\ 
Now, $\widetilde Au \in  \rho_U(\widetilde \cm)^tU $ (as implied by Eq. (\ref{eq-prop-upperbound-def})) together with $\widetilde Au=(\rho-\epsilon)^tu$ imply $$ \rho_U(\widetilde \cm) \geq \rho-\epsilon. $$ since this must hold for any $\epsilon$ we obtain (\ref{eq-prop-upperbound}).
\end{proof}

Since algorithm $1$ starts with a set of vectors $U_0,$ which are rank one matrices, the set $U$ actually contains only rank one matrices during the whole run of the algorithm.  Hence,
 it is not clear that one obtains for sure a valid norm (as required to go out of the while loop in the algorithm).  The next proposition shows how to deal with this problem:
\begin{proposition}\label{prop-subspace}
If the matrices in $ \cm$ do not have a common invariant real subspace, then Algorithm $1$ generates in finite time a set $U$ such that  $\conv(U) \cap \inter \mathcal S^n_{+}\neq \emptyset.$\\ Moreover,
if the algorithm does not generate such a set $U$ after $n$ iterations, then (with the notations of Algorithm $1$) $S=\mbox{span } \PF(\real{(v_0)})$ is an invariant real subspace, and one can project the matrices on $S$ and on $S^{\perp}$ and iterate the algorithm. 
\end{proposition}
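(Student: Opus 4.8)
\medskip
\noindent\textbf{Proof plan.}
The plan is to stop following the sets $U_0$ and to track instead only the subspace $R_k\subseteq\rn$ spanned by the ranges of the matrices contained in $U_0$ after $k$ passes through the \emph{while} loop of the preprocessing phase. There are three ingredients: re-express the stopping test in terms of $R_k$, establish a linear recursion for $R_k$, and run the usual ascending-chain argument. For the first ingredient, observe that every vertex ever produced by Algorithm~1 is a semidefinite lift $\widetilde v=\RE(v)\,\RE(v)^{T}+\Ima(v)\,\Ima(v)^{T}\in\mathcal S^n_+$, with $\operatorname{range}(\widetilde v)=\mathrm{span}\{\RE(v),\Ima(v)\}$ (we may assume $\RE(v_0)\neq 0$, replacing $v_0$ by $iv_0$ if needed, which does not change $\widetilde{v_0}$). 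Since $\sum_j\widetilde{v_j}$ is again positive semidefinite with range $\sum_j\operatorname{range}(\widetilde{v_j})$, and since $X\preceq Y$ implies $\operatorname{range}(X)\subseteq\operatorname{range}(Y)$ for positive semidefinite $X,Y$, every affine combination of the vertices has range contained in $R_k$, the uniform convex combination attaining it. Hence the three conditions $\mathrm{span}(U_0)\cap\inter\mathcal S^n_+\neq\emptyset$, $\conv(U_0)\cap\inter\mathcal S^n_+\neq\emptyset$, and $R_k=\rn$ are all equivalent; so the loop runs exactly while $R_k\neq\rn$, and when it exits, $U:=U_0$ has the required property.

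\medskip
The crux is a clean recursion for $R_k$, and here I would use two facts. First, $\operatorname{range}(\widetilde A\,\widetilde v)=\operatorname{range}(\widetilde{Av})=A\bigl(\operatorname{range}(\widetilde v)\bigr)$, which is immediate from $Av=A\,\RE(v)+iA\,\Ima(v)$. Second --- and this is the one step that genuinely uses the cone structure --- passing to an essential system of vertices does not change $\sum_{u}\operatorname{range}(u)$: if $\conitope(U')=\conitope(V)$, then every $v\in V$ satisfies $v\preceq\sum_i\lambda_i u'_i$ with the $u'_i\in U'$, hence $\operatorname{range}(v)\subseteq\sum_i\operatorname{range}(u'_i)$, and symmetrically with the roles of $U'$ and $V$ exchanged. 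Combining these (and noting that rescaling $\cM$ by the constant $C$ is irrelevant for spans) yields the recursion $R_0=\mathrm{span}\{\RE(v_0),\Ima(v_0)\}$ and $R_k=R_{k-1}+\sum_{i\in{\cal I}}A_iR_{k-1}$ for $k\ge1$. Unwinding it identifies $R_k$ with the span of the length-$\le k$ orbit of $\RE(v_0)$ (and, when $v_0\notin\rn$, of $\Ima(v_0)$) under $\cM$; in particular $R_\infty:=\bigcup_kR_k=\mathrm{span}\,\PF(\RE(v_0))$ (together with $\PF(\Ima(v_0))$ in the complex case).

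\medskip
It then remains to run the standard reachable-subspace argument. We have $R_0\subseteq R_1\subseteq\cdots$, and as soon as $R_k=R_{k-1}$ the chain is constant thereafter; since $1\le\dim R_0\le\dim R_1\le\cdots\le n$, it stabilises within $n-1$ steps, so $R_\infty=R_{n-1}$, which is invariant under $\cM$. If $\cM$ has no common invariant real subspace, then $R_\infty$ is nonzero and invariant, hence equal to $\rn$; by the stopping test above the \emph{while} loop terminates within $n-1$ iterations with a set $U$ satisfying $\conv(U)\cap\inter\mathcal S^n_+\neq\emptyset$, which is the first assertion. Otherwise, if the loop has not terminated after $n$ iterations, the stopping test gives $R_n\neq\rn$, while the dimension count forces $R_n=R_{n-1}=R_\infty$; hence $S=R_\infty$ is a proper, nonzero, common invariant real subspace. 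Finally, choosing a basis adapted to $\rn=S\oplus S^\perp$ makes every $A\in\cM$ block upper triangular, with diagonal blocks the restriction $A|_S$ and the induced map $\bar A$ on $\rn/S\cong S^\perp$; since $\rho(\cM)=\max\{\rho(\{A|_S\}),\rho(\{\bar A\})\}$ for such simultaneously block-triangular sets, one restarts the algorithm on the two strictly smaller sets. The only genuinely non-routine point is the invariance of $\sum_u\operatorname{range}(u)$ under the passage to an essential system, used in the second paragraph; everything else is bookkeeping.
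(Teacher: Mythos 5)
Your proof is correct, but it follows a genuinely different route from the paper. The paper argues by contraposition and duality: if $\conv\bigl(\cP(\widetilde\cm)\widetilde v_0\bigr)$ never meets $\inter\mathcal S^n_+$, a separation argument yields a real vector $x$ with $x^*A\widetilde v_0A^*x=0$ for every product $A\in\PF$; writing $v_0=v_1+iv_2$ this forces $x^*Av_iv_i^*A^*x=0$, so the orbit spans $\mathrm{span}\,\PF v_i$ are proper invariant real subspaces, and the stabilization after $n$ steps is asserted via the fact that the orthogonality condition need only be checked on products of length at most $n$. You instead work primally: you track the subspace $R_k$ spanned by the ranges of the current vertices, show the loop test is equivalent to $R_k=\rn$ (using that a convex combination of PSD matrices is definite exactly when the ranges fill $\rn$, and that ranges of linear combinations stay inside the sum of ranges), derive the recursion $R_k=R_{k-1}+\sum_i A_iR_{k-1}$, and finish with the standard ascending-chain argument, recovering the same invariant subspace $\mathrm{span}\,\PF(\RE(v_0))$ (up to also carrying $\RE(v_0)$, $\Ima(v_0)$ themselves, which is immaterial since any of these orbit spans is invariant and proper in the failing case). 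One concrete advantage of your version: the paper's proof reasons about the full orbit $\cP(\widetilde\cm)\widetilde v_0$, whereas the algorithm only keeps an essential system of vertices at each pass; your observation that passing to an essential system preserves $\sum_u\operatorname{range}(u)$ (via $0\preceq X\preceq Y\Rightarrow\operatorname{range}(X)\subseteq\operatorname{range}(Y)$) closes this gap explicitly, which the paper leaves implicit. The paper's dual argument is shorter and produces the separating vector $x$ directly, but both proofs hinge on the same stabilizing chain of orbit spans, so the results and the projection/iteration step at the end coincide.
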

\begin{proof}
By contraposition, suppose that $\conv \{({\cP(\widetilde \cm) }\widetilde v_0)\}\cap \inter{\cS_{n+}}=\emptyset.$ This implies that there exists a real vector $x$ such that $$\forall A \in \PF, x^* A (\widetilde v_0) A^*x =0.$$  Then, letting $$v_0=v_1+iv_2,\quad v_i\in \re^n,$$ we have that \begin{equation}\label{eq-prop-subspace}\forall A \in \PF, x^* A (v_1v_1^*+v_2v_2^*) A^*x =0,\end{equation} which implies that $$x^*Av_iv_i^* A^*x=0,\quad i=1,2$$ and $\PF v_i$ is a non trivial invariant subspace for $\cm.$ 

Moreover, Equation (\ref{eq-prop-subspace}) holds if and only if it holds for all $A\in {\mathcal P}_k(\cM),$ $k=1,2,\ldots n.$ Thus, if it still holds at step $n,$ one can project $\cm$ on (for instance) $\PF v_1$ and on its orthogonal space, and reiterate the algorithm on the two sets of smaller matrices.
\end{proof}

In the proof of our main theorem below, we also need the following technical proposition:

\begin{proposition}\label{lem-az-z}\cite{elsner-generalized}
Let $||\cdot||$ be a matrix norm in $\cn$ induced by the vector norm $|\cdot|.$  There is an absolute constant $c(n)>0$ such that for all $z\in\cn,\, |z|=1,$ and all $A\in \mathbb C^n,\, ||A||\leq 1,$ there is an eigenvalue $\lambda$ of $A$ such that
$$|1-\lambda|\leq c |Az-z|^{1/n}.
$$
\end{proposition}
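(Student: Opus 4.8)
The plan is to reduce the estimate to a bound on the resolvent norm $\|(A-I)^{-1}\|$. If $1$ is an eigenvalue of $A$ there is nothing to prove (take $\lambda=1$), so assume $B := A-I$ is invertible and set $\delta := |Az-z| = |Bz|$. Since $z = B^{-1}(Bz)$ and $|z|=1$, submultiplicativity of the induced norm gives $1 \le \|B^{-1}\|\,\delta$. Hence it suffices to prove the following: if every eigenvalue $\lambda$ of $A$ satisfies $|1-\lambda| > \varepsilon$, then $\|B^{-1}\| \le \kappa(n)\,\varepsilon^{-n}$ for an explicit constant $\kappa(n)$. Indeed, this forces $\varepsilon^{n} \le \kappa(n)\,\delta$ for every $\varepsilon$ strictly below $\min_{\lambda}|1-\lambda|$, and since that minimum is attained, some eigenvalue $\lambda$ satisfies $|1-\lambda| \le \kappa(n)^{1/n}\,\delta^{1/n}$.

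For the resolvent bound I would use the Cayley--Hamilton theorem so as to stay entirely inside the given norm. Writing the characteristic polynomial of $B$ as $\chi_B(t)=\prod_{i=1}^{n}(t-\mu_i)=\sum_{k=0}^{n}c_k t^{k}$, with $\mu_i=\lambda_i-1$ the eigenvalues of $B$, the relation $\chi_B(B)=0$ yields $c_0 I = -B\sum_{k=1}^{n}c_k B^{k-1}$, hence $B^{-1} = -c_0^{-1}\sum_{k=1}^{n}c_k B^{k-1}$. I then estimate the three ingredients: $|c_0|=\prod_i|\mu_i|\ge\varepsilon^{n}$ by hypothesis; each $c_k=(-1)^{n-k}e_{n-k}(\mu_1,\dots,\mu_n)$ is an elementary symmetric function, so $|c_k|\le\binom{n}{n-k}\rho(B)^{n-k}$; and since $\rho(A)\le\|A\|\le 1$ (an induced norm dominates the spectral radius) we get $\rho(B)\le 2$ and $\|B\|\le\|A\|+\|I\|\le 2$. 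Substituting,
$$\|B^{-1}\|\ \le\ \frac{1}{\varepsilon^{n}}\sum_{k=1}^{n}\binom{n}{n-k}2^{n-k}2^{k-1}\ =\ \frac{2^{n-1}}{\varepsilon^{n}}\sum_{k=1}^{n}\binom{n}{n-k}\ \le\ \frac{2^{2n-1}}{\varepsilon^{n}},$$
so $\kappa(n)=2^{2n-1}$ works, and the statement follows with $c(n)=2^{(2n-1)/n}<4$ (in fact an absolute constant).

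The one point that needs care --- and the reason I would avoid the tempting route of triangularizing $A$ --- is that $|\cdot|$ is an arbitrary vector norm: there is no orthonormal basis adapted to it and no entrywise control of $A$, so the whole argument must be phrased purely through submultiplicativity of $\|\cdot\|$ together with $\rho(\cdot)\le\|\cdot\|$. The Cayley--Hamilton expression for $B^{-1}$ is precisely what keeps us within those two tools; by contrast, bounding $B^{-1}=\mathrm{adj}(B)/\det B$ directly would require controlling $(n-1)\times(n-1)$ minors of $B$ in the operator norm, which is exactly the estimate that breaks under a change of vector norm.
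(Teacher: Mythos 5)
Your argument is correct and complete: the reduction $1\le\|B^{-1}\|\,|Bz|$, the Cayley--Hamilton expression $B^{-1}=-c_0^{-1}\sum_{k\ge1}c_kB^{k-1}$, the bounds $|c_0|>\varepsilon^n$, $|c_k|\le\binom{n}{n-k}\rho(B)^{n-k}$, $\rho(B)\le2$, $\|B\|\le2$ (using $\|I\|=1$ for an induced norm) all check out, and the arithmetic gives $\kappa(n)=2^{2n-1}$, hence $c=2^{(2n-1)/n}<4$. Note, however, that the paper does not prove this proposition at all: it is quoted from Elsner's paper, whose own proof is the ``analytic-geometric'' one you deliberately avoided --- it writes $\prod_i|1-\lambda_i|=|\det(I-A)|$ and bounds this determinant by a constant times $|(I-A)z|$ via a volume comparison (the image of a suitable basis under $I-A$ consists of $z\mapsto$ a short vector and $n-1$ vectors of norm at most $2$, with the passage between the given norm and the Euclidean one absorbed into $c(n)$), after which $\min_i|1-\lambda_i|^n\le\prod_i|1-\lambda_i|$ finishes. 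So your route is genuinely different: it trades the determinant/volume estimate for a resolvent bound obtained purely from submultiplicativity and $\rho(\cdot)\le\|\cdot\|$, which has the pleasant side effects of being basis-free as you emphasize and of yielding an explicit constant that is uniform in $n$ (any $c\ge4$ works), strictly stronger than the dimension-dependent constant the statement asks for; Elsner's geometric argument, on the other hand, generalizes more readily to statements about several almost-fixed directions, which is why it appears in his proof of the generalized spectral-radius theorem. One cosmetic remark: state explicitly that the minimum $\min_\lambda|1-\lambda|$ is positive when $B$ is invertible and that $\delta>0$ in that case, so the limiting step $\varepsilon\uparrow\min_\lambda|1-\lambda|$ is legitimate; as you note, the degenerate cases ($1\in\mathrm{spec}(A)$ or $Az=z$) are trivial.
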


%

\begin{thm}\label{thm-conv}
Both algorithms $1$ and $2$ are exact, i.e., both the upper bound $Y=C*B$ and the lower bound $C$ asymptotically converge towards the joint spectral radius.
\end{thm}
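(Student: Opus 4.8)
The plan is to split the argument into two independent parts: the \emph{correctness of the bounds} (that $C \le \rho(\mathcal{M}) \le Y = C\cdot B$ at every stage of both algorithms) and the \emph{asymptotic convergence} ($B \to 1$, equivalently $Y \to C \to \rho(\mathcal{M})$). The lower bound $C$ is trivial: at every stage $C = \rho(A)^{1/t}$ for an actual product $A \in \mathcal{P}_t(\mathcal{M})$ (or, in Algorithm~2, an updated such product via the subproduct search with value $L$), so $C \le \bar\rho_t(\mathcal{M}) \le \rho(\mathcal{M})$ by \eqref{eq-sandwich}; in Algorithm~2 the rescaling steps only keep this invariant. The upper bound is where Proposition~\ref{prop-upperbound} does the work: after scaling by $C$ we are looking at $\widetilde{\mathcal{M}'}$ with $\rho \ge 1$, and $B = \max\{|w|_U : w\in \widetilde{\mathcal{M}'}U\} = \widehat\rho_U(\widetilde{\mathcal{M}'})$ (once $|\cdot|_U$ is a genuine norm, which Proposition~\ref{prop-subspace} guarantees after finitely many preprocessing iterations). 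Hence $\rho(\widetilde{\mathcal{M}'}) \le B$, and since $\rho(\widetilde{\mathcal{M}'}) = \rho(\mathcal{M}')^2 = (\rho(\mathcal{M})/C)^2$ by Theorem~\ref{th:equivalence}, we get $\rho(\mathcal{M}) \le C\sqrt{B}$. (One must check whether the claimed upper bound in the algorithm is $C\cdot B$ or $C\sqrt{B}$; the square appears because the lifting squares the jsr, so I would state the bound accordingly, or note that $B$ is taken after one lifted step so that $|w|_U$ already incorporates the square-root scaling — I would make this bookkeeping explicit.)

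For convergence, fix the scaled set and suppose for contradiction that $B$ stays bounded away from $1$, say $B \ge 1+\delta$ forever, while the essential systems $U$ grow through the procedure $U \mapsto \text{ess.sys.}(\conitope(U\cup \widetilde{\mathcal{M}}U))$. The key structural fact is that each new vertex $w$ added has $|w|_U > 1$ (otherwise it would be pruned), and in fact $|w|_U \ge B \ge 1+\delta$, meaning every retained vertex sits strictly outside the previously built conitope by a fixed multiplicative gap. Since all the iterates live inside the cone $\mathcal{S}^n_+$ and — after normalizing — on (or near) a bounded region (here I would use that $\rho(\widetilde{\mathcal{M}}) \ge 1$ after scaling, so the trajectory lengths $|w|_U$ cannot blow up if $B$ is bounded, while if $\rho > 1$ strictly the escaping/rescaling mechanism of Algorithm~2, or the detection $D<1$ of Algorithm~1, intervenes), this forces an infinite $\delta$-separated set of points in a bounded set, a contradiction by compactness. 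So $B \to 1$.

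The part that needs real care — and I expect it to be the main obstacle — is making rigorous the ``bounded region'' claim and handling the interaction between the growing conitope and the dynamic rescaling in Algorithm~2. For Algorithm~1 one invokes the classical fact (mentioned in the remark) that the leading eigenvectors of the permutations of the candidate SMP lie on the boundary of any extremal norm, so starting from $v_0 = \mathrm{L.E.}(A)$ the conitope is ``anchored'' and cannot collapse; combined with Proposition~\ref{lem-az-z} (Elsner's estimate) one shows that if $B$ were $\ge 1+\delta$ one could extract a product whose spectral radius exceeds $C^t$, so the check $D<1$ would eventually trigger and send us back to preprocessing with a strictly better $C$ — and since $C$ is bounded above by $\rho(\mathcal{M})$ and each improvement is by a definite amount only finitely often, eventually $B\to 1$. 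Concretely: take a long product $\widetilde A = \widetilde A_{i_k}\cdots \widetilde A_{i_1}$ realizing $|\widetilde A u|_U$ large; apply Proposition~\ref{lem-az-z} to the normalized operator to find an eigenvalue $\lambda$ with $|1-\lambda|$ small when $|\widetilde A z - z|$ is small, i.e. when the norm is nearly invariant; the failure of near-invariance ($B$ bounded away from $1$) then yields, along a subsequence, either a genuine norm decrease (convergence) or a detectably better product. I would organize the convergence proof as: (i) the sequence of conitopes is monotone (nested unit balls) so $B_k$ is nonincreasing and has a limit $B_\infty \ge 1$; (ii) assume $B_\infty > 1$; (iii) use compactness of the normalized vertex set plus the $\delta$-separation to get finitely many vertices, hence a \emph{fixed} finite conitope from some step on; (iv) on a fixed conitope, $\widetilde{\mathcal{M}}U \subset \conitope(U)$ would mean $B_\infty \le 1$, contradiction — unless the algorithm is in its rescaling/escaping branch infinitely often, which by the lower-bound monotonicity and boundedness can happen only finitely often. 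This closes the argument; the delicate points are step (iii)'s compactness (needs the a priori bound on $|w|_U$) and the careful case split in step (iv) between the two algorithms' termination/escape mechanisms.
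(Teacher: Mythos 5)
Your decomposition (validity of the bounds via Proposition~\ref{prop-upperbound}, then a compactness argument for convergence) contains several of the right ingredients, and you correctly anticipate two tools the paper also uses: Elsner's estimate (Proposition~\ref{lem-az-z}) to turn approximate invariance into better candidate products, and boundedness of the normalized semigroup. But the convergence mechanism you propose has genuine gaps. First, your separation argument rests on two claims that are false as stated: a retained vertex only needs $|w|_U>1$ to survive the pruning, not $|w|_U\ge B$ ($B$ is the \emph{maximum} over $W$, not the minimum), so there is no uniform multiplicative gap for ``every retained vertex''; and the monotonicity of $B_k$ in your step (i) does not follow from the nesting of the conitopes, since $B_{k+1}$ is computed from the images of the \emph{newly added} vertices, which may protrude more than the old ones did. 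The packing argument could be repaired by tracking only one witness vertex attaining $B_k$ per step, but that is not what you wrote.

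Second, and more fundamentally, the compactness/packing argument needs the iterates to stay in a bounded region, which holds only when the scaling is exact ($C=\rho$) and the set is irreducible; whenever $C<\rho$ (in particular when no SMP exists and $C\to\rho$ only in the limit) the lifted trajectory is unbounded, and you yourself flag this interaction with the rescaling/escape mechanisms as the main obstacle without closing it. The paper closes it by a different route: it first proves convergence of the \emph{lower} bound --- for Algorithm~1 by showing that if $\rho(\cm')>1$ then, using the $n$ linearly independent trajectory points furnished by Proposition~\ref{prop-subspace}, arbitrarily large iterates eventually dominate $\widetilde v_0$ in the semidefinite order ($\sum \widetilde v_i/n \succeq \widetilde v_0$), which triggers the escape test and the return to preprocessing, whence $C\to\rho$ through the gsr definition; for Algorithm~2 by the Elsner-plus-compactness argument producing subproducts whose averaged spectral radius tends to $\rho$ --- and only then deduces upper-bound convergence from $C_t\to\rho$ together with boundedness of $\cP(\widetilde\cm/\rho)$, via Hausdorff convergence of the scaled conitopes to the limiting invariant body, not via a vertex-separation count. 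Note also that ``$C$ is nondecreasing and bounded above by $\rho$'' only gives convergence of $C$ to \emph{some} limit; identifying the limit with $\rho$ requires exactly the quantitative step above (exhaustive search over longer products, or Elsner's bound applied along a convergent subsequence), which your sketch gestures at but does not carry out. Your remark about $C\cdot B$ versus $C\sqrt{B}$ is a fair observation about the bookkeeping induced by the lifting and is harmless for the asymptotic statement, since $B\ge 1$ throughout.
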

 The proof can be found in the extended version of this paper \footnote{\emph{Lifted polytope methods
for the asymptotic analysis of matrix semigroups},Preprint, 2012, by the same authors.}.
\begin{remark}
If the set $\cm$ is not irreducible, then we can still use Algorithm $2$, but we cannot converge for sure towards an invariant conitope, because such a conitope might not exist.  However, the lower bound is still valid, and still converges towards to JSR.  For the upper bound, one has to resort to the classical estimate $\rho \leq \sup_{A\in \cm^t} (||A||^{1/t}).$
\end{remark}
%
%

%
%
\section{Finiteness results}\label{sec-finiteness}

\subsection{Preliminary results: asymptotic simplicity and extremal polytope norms}

We denote here the normalized set 
$$
{\mathcal{M}}^{\p} = \frac{\mathcal{M}}{\rho(\mathcal{M})}.
$$
Now we report the existence results proved in \cite{GuWiZe05}, which involve the
normalized trajectories 
$$
\mathcal{T}[{\mathcal{M}}^{\p},x] = \{x\} \cup \{Px \mid P\in \cP(\cM^{\p})\}
$$
where $\cP(\cM) \triangleq \bigcup\limits_{k \ge 0} \cP_{k} (\cM)$.
First we observe that all the cyclic permutations of a product
$\bar{P}$ have the same eigenvalues with the same multiplicities.
Thus, if $\bar{P}=A_{i_{k^{\p}}}\ldots A_{i_1}$ is an SMP for
a set $\mathcal{M}$, then each of its cyclic permutations
$$
A_{i_s}\ldots A_{i_1}A_{i_{k^{\p}}}\ldots A_{i_{s+1}},
\ \ \ \ \ s=1,\ldots ,k^{\p}-1,
$$
still is an SMP for $\mathcal{M}$.

\begin{definition}[Asymptotically simple set]
A nondefective bounded set $\mathcal{M}$ of complex $n\times n$-matrices is
called \emph{asymptotically simple} if it has a minimal SMP $\bar{P}$ with only
one leading eigenvector (modulo scalar nonzero factors) such that the set
$X$ of the leading eigenvectors of $\mathcal{M}$ is equal to the set of
the leading eigenvectors of $\bar{P}$ and of its cyclic permutations.
\label{definition_as}
\end{definition}

We remark that the asympotic simplicity of $\mathcal{M}$ means that the set
$X$ of the leading eigenvectors of $\mathcal{M}$ is {\em $\mathcal{M}$-cyclic},
i.e., for any pair $(x,y)\in X \times X$ there exist $\alpha, \beta \in \C$ with
$$
|\alpha|\cdot |\beta|=1
$$
and two normalized products $P^{\p},Q^{\p}\in \P(\mathcal{M}^{\p})$ such that
$$
y=\alpha P^{\p} x \ \ \  {\rm and}  \ \ \ x=\beta Q^{\p} y.
$$

\begin{thm}\cite{GuWiZe05}\label{theo-as}
Assume that a finite set $\mathcal{M}$ of complex
$n\times n$-matrices is nondefective and asymptotically simple.
Moreover, let $x\neq 0$ be a leading eigenvector of $\mathcal{M}$ and assume that
${\rm span}\Big(\mathcal{T}[\mathcal{M}^{\p},x]\Big)=\C^n$. Then the set
\begin{equation}
\partial \mathcal{S}[\mathcal{M}^{\p},x] \bigcap \mathcal{T}[\mathcal{M}^{\p},x]
\label{intersect}
\end{equation}
is finite (modulo scalar factors of unitary modulus). As a consequence,
there exist a finite number of normalized products $P^{\p}_1,
\ldots ,P^{\p}_s\in \P(\cM')$ such that
\begin{equation}
\mathcal{S}[\mathcal{M}^{\p},x]={\rm absco}\Big(\{x,P^{\p}_1 x,\ldots ,
P^{\p}_s x\}\Big),
\label{eq:bcp}
\end{equation}
so that $\mathcal{S}[\mathcal{M}^{\p},x]$ is an invariant balanced complex polytope.
\label{theoremGWZ}
\end{thm}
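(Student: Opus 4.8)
We sketch the argument (the result is from~\cite{GuWiZe05}). The plan is to realise $\mathcal{S}[\mathcal{M}^{\p},x]:=\overline{{\rm absco}(\mathcal{T}[\mathcal{M}^{\p},x])}$ as the unit ball of an extremal norm, to encode the boundary points of the trajectory as the nodes of a finitely branching tree, and to exclude an infinite branch of that tree by combining the eigenvalue estimate of Proposition~\ref{lem-az-z} with asymptotic simplicity. First, since $\mathcal{M}$ is nondefective and $\rho(\mathcal{M}^{\p})=1$, the semigroup $\cP(\mathcal{M}^{\p})$ is bounded, so $\mathcal{T}[\mathcal{M}^{\p},x]$ is bounded and $\mathcal{S}[\mathcal{M}^{\p},x]$ is a compact convex balanced set; by ${\rm span}(\mathcal{T}[\mathcal{M}^{\p},x])=\C^n$ it is full-dimensional, hence $0$ lies in its interior and it is the closed unit ball of a norm $\|\cdot\|_*$. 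As $A\,\mathcal{T}[\mathcal{M}^{\p},x]\subseteq\mathcal{T}[\mathcal{M}^{\p},x]$ for every $A\in\mathcal{M}^{\p}$, we get $A\,\mathcal{S}[\mathcal{M}^{\p},x]\subseteq\mathcal{S}[\mathcal{M}^{\p},x]$, i.e. $\|A\|_*\le1$, and since $\rho(\mathcal{M}^{\p})\le\|\mathcal{M}^{\p}\|_*$ always (Proposition~\ref{proposition_csr}) the norm $\|\cdot\|_*$ is extremal in the sense of Definition~\ref{def_extremality}. Rescaling $x$ we may assume $\|x\|_*=1$, so $x\in\partial\mathcal{S}[\mathcal{M}^{\p},x]$; moreover, if $\bar P=A_{i_{k^{\p}}}\cdots A_{i_1}$ is the minimal SMP with $\bar P x=\mu x$, $|\mu|=1$, then for each $s$ the vector $w_s:=A_{i_s}\cdots A_{i_1}x\in\mathcal{T}[\mathcal{M}^{\p},x]$ is a leading eigenvector of the $s$-th cyclic permutation of $\bar P$, so $X$ is contained in $\mathcal{T}[\mathcal{M}^{\p},x]$ modulo unit-modulus scalars.

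Next, if $P^{\p}x\in\partial\mathcal{S}[\mathcal{M}^{\p},x]$ with $P^{\p}=A_{i_k}\cdots A_{i_1}$, then for every prefix $Q_j=A_{i_j}\cdots A_{i_1}$ we have $1=\|P^{\p}x\|_*\le\|Q_jx\|_*\le\|x\|_*=1$, so $Q_jx$ also lies on $\partial\mathcal{S}[\mathcal{M}^{\p},x]$. Hence the boundary trajectory points, taken modulo scalars, are the nodes of a tree rooted at $[x]$ with branching at most $|\mathcal{I}|<\infty$. Were the set in~(\ref{intersect}) infinite, K\"onig's lemma would produce an infinite branch: indices $i_1,i_2,\dots$ with $v_m:=A_{i_m}\cdots A_{i_1}x$ pairwise distinct modulo scalars and $\|v_m\|_*=1$ for all $m$. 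The unit sphere of $\|\cdot\|_*$ being compact and $\cP(\mathcal{M}^{\p})$ precompact, we extract a subsequence with $v_{m_j}\to y$, $\|y\|_*=1$, along which the return blocks $R_j:=A_{i_{m_{j+1}}}\cdots A_{i_{m_j+1}}$ (satisfying $R_jv_{m_j}=v_{m_{j+1}}$) converge to some $R$, so that $Ry=y$. If the lengths of the $R_j$ stay bounded along a further subsequence then $R\in\cP(\mathcal{M}^{\p})$, and $Ry=y$ together with $\|R\|_*\le1$ and $\rho(\mathcal{M}^{\p})=1$ forces $R$ to be an SMP with leading eigenvector $y$, so $y\in X$; in the unbounded-length case one argues, applying Proposition~\ref{lem-az-z} to $R_j$ (for which $\|R_j\|_*\le1$ and $\|R_jv_{m_j}-v_{m_j}\|_*=\|v_{m_{j+1}}-v_{m_j}\|_*\to0$) and again using precompactness and the normalisation, that the limiting direction $y$ is still a leading eigenvector of $\mathcal{M}$. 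In either case every accumulation point of the boundary trajectory lies in $X$.

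The remaining point is that the branch $(v_m)$ cannot accumulate at a direction $\C y$, $y\in X$, through infinitely many distinct points, and this is where asymptotic simplicity is genuinely used: $X$ is finite and $\mathcal{M}$-cyclic and, the set being nondefective, $y$ spans a one-dimensional eigenspace of the relevant cyclic permutation of $\bar P$ carrying a unit-modulus eigenvalue; a local analysis of the extremal norm near its leading eigenvectors then shows that a whole neighbourhood of the ray $\C y$ meets $\partial\mathcal{S}[\mathcal{M}^{\p},x]\cap\mathcal{T}[\mathcal{M}^{\p},x]$ only in scalar multiples of $y$, contradicting the previous paragraph. Thus~(\ref{intersect}) is finite modulo scalars, and the same reasoning shows that $\overline{\mathcal{T}[\mathcal{M}^{\p},x]}\cap\partial\mathcal{S}[\mathcal{M}^{\p},x]$ contributes no further directions. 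By Milman's theorem ${\rm ext}(\mathcal{S}[\mathcal{M}^{\p},x])\subseteq\overline{\mathcal{T}[\mathcal{M}^{\p},x]}\cap\partial\mathcal{S}[\mathcal{M}^{\p},x]$, so ${\rm ext}(\mathcal{S}[\mathcal{M}^{\p},x])$ consists of the unit-modulus multiples of a finite set $\{x,P^{\p}_1x,\dots,P^{\p}_sx\}$ with $P^{\p}_i\in\cP(\mathcal{M}^{\p})$, and Krein--Milman then gives~(\ref{eq:bcp}); invariance was observed in the first paragraph, so $\mathcal{S}[\mathcal{M}^{\p},x]$ is an invariant balanced complex polytope. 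I expect the local estimate near the leading eigenvectors in this last paragraph, together with the unbounded-block-length case above, to be the main obstacle; everything else is compactness and convexity bookkeeping.
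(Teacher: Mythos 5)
Since this theorem is quoted from \cite{GuWiZe05} and not proved in the present paper, the natural in-paper benchmark is the appendix proof of the lifted analogue, Theorem~\ref{thm-as}, which follows the same strategy as the cited source. Your overall architecture (unit ball $\mathcal{S}[\mathcal{M}^{\p},x]={\rm absco}$ of the trajectory as an extremal norm, prefix-monotonicity of boundary membership, accumulation points of boundary trajectory points must be leading eigenvectors, asymptotic simplicity to forbid infinitely many such points) is the right one, but two steps that carry the real weight are asserted rather than proved. First, the K\"onig's-lemma step does not deliver what you claim: the tree of prefixes whose trajectory points lie on $\partial\mathcal{S}$ is infinite and finitely branching, so an infinite branch exists, but nothing forces the points $v_m$ along that branch to be pairwise distinct modulo unimodular scalars --- the branch could simply cycle through the finitely many leading eigenvectors. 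Reducing an infinite boundary set to infinitely many distinct directions along a \emph{single} switching sequence is a genuine compactness lemma (it is exactly what the paper invokes as \cite[Lemma~5.19]{GuWiZe05} in the proof of Theorem~\ref{thm-as}), and your proposal skips it.

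Second, the final contradiction is missing. You reduce to ``accumulation directions lie in $X$'' and then appeal to an unproved ``local analysis of the extremal norm near its leading eigenvectors'' showing that a neighbourhood of $\C y$ meets $\partial\mathcal{S}\cap\mathcal{T}$ only in multiples of $y$; as you note yourself, this is the main obstacle, and in local form it is essentially the finiteness statement being proved, so as written it begs the question. The cited proof (mirrored in the paper's proof of Theorem~\ref{thm-as}) uses asymptotic simplicity quite differently: pick $t_0$ with $v_{t_0}\in X$ and $v_{t_0+1}\notin X$, let $w\in X$ be an accumulation point of the tail $A_{i_{t_k}}\cdots A_{i_{t_0+2}}v_{t_0+1}$, use $\mathcal{M}$-cyclicity to produce $P\in\cP(\mathcal{M}^{\p})$ with $Pw=v_{t_0}$ (up to a unimodular factor), and then $A_{i_{t_0+1}}P A_{i_{t_k}}\cdots A_{i_{t_0+2}}v_{t_0+1}\to v_{t_0+1}$ forces $v_{t_0+1}$ to be a leading eigenvector, a contradiction. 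Relatedly, in your unbounded-return-block case the appeal to Proposition~\ref{lem-az-z} only yields eigenvalues of $R_j$ tending to $1$; passing from that to ``the limit direction $y$ is a leading eigenvector of $\mathcal{M}$'' (i.e.\ of a \emph{finite} product) is again left at the level of ``one argues''. Until the single-trajectory reduction, the identification of accumulation points with elements of $X$, and the cyclicity-based contradiction are actually carried out, the proposal is an accurate roadmap but not a proof.
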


\subsection{Asymptotic simplicity and extremal conitope norms}
We assert that if a set of matrices is asymptotically simple, not only it admits an extremal polytope norm as claimed by Theorem \ref{theo-as}, but its lifted set also admits an \emph{extremal conitope norm.}  The proofs are to be found in the extended version of this paper.  We start with a technical lemma.
\begin{lemma}\label{lem-conv2zero}
 If a sequence of vectors $s_i \in \cn$ converges to zero, the set
$\coni(\{ \tilde s_i\})$ is a conitope. 
%
\end{lemma}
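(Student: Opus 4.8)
We want to show that the conical convex hull of the (infinitely many) lifted points $\tilde s_i = \RE(s_i s_i^*)$ is actually generated, as a conitope, by only finitely many of them. The plan is to exploit the ordering $\preceq$ on $\cS^n_+$ together with the fact that $\tilde s_i \to 0$ in norm: once the $\tilde s_i$ are small enough, they are dominated (in the cone order) by a convex combination of the finitely many "large" ones, hence they are redundant for the conical convex hull.

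First I would fix the setup: write $\cK = \cS^n_+$, and recall that $\coni(U) = \{x \in \cK : \exists\, y \in \conv(U),\ x \preceq y\}$, so adding a point $u$ that already lies in $\coni(U')$ for a smaller set $U'$ does not change the conical convex hull. Thus it suffices to produce a finite subset $F \subset \{\tilde s_i\}$ such that $\tilde s_j \in \coni(F)$ for every $j$. Since $s_i \to 0$, we have $\tilde s_i \to 0$ as well (the lifting is continuous, indeed quadratic in $s_i$). The key observation is that $\conv(\{\tilde s_i\}) \cap \inter \cK$ is nonempty \emph{unless} all the $s_i$ lie in a common proper subspace — and I would split into these two cases. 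In the generic case, pick finitely many indices $i_1,\dots,i_m$ such that $\bar y := \frac1m(\tilde s_{i_1} + \cdots + \tilde s_{i_m}) \in \inter \cK$; then $\bar y \succ 0$, so there is $\delta > 0$ with $\delta I \preceq \bar y$. Now for any index $j$ with $\|\tilde s_j\| \le \delta$ (all but finitely many, by convergence) we have $\tilde s_j \preceq \|\tilde s_j\| I \preceq \delta I \preceq \bar y$, and since $\bar y \in \conv(F)$ with $F = \{\tilde s_{i_1},\dots,\tilde s_{i_m}\}$, this gives $\tilde s_j \in \coni(F)$. Enlarging $F$ by the finitely many remaining indices $j$ with $\|\tilde s_j\| > \delta$ yields a finite generating set, so $\coni(\{\tilde s_i\}) = \coni(F)$ is a conitope.

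It remains to handle the degenerate case where all $s_i$ lie in a proper subspace $W \subsetneq \cn$, so that the $\tilde s_i$ all lie in the face $\cS_W := \{X \in \cS^n_+ : \range(X) \subseteq W\}$, which is itself linearly isomorphic to $\cS^{\dim W}_+$. Here I would argue by induction on $n$ (the case $n=1$ being trivial): restrict attention to the sub-cone $\cS_W$, apply the argument above relative to the relative interior of $\cS_W$, and conclude. The main obstacle — and the point that needs care — is precisely guaranteeing that one can find finitely many of the $\tilde s_i$ whose average hits the (relative) interior of the relevant face; this is where the reduction to the minimal subspace spanned by the $s_i$ is essential, since after that reduction the $s_i$ do span, and a finite spanning sub-collection exists by the usual finite-dimensionality argument, making the average of the corresponding rank-one lifts positive definite on that subspace. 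Once that is secured, the domination estimate $\tilde s_j \preceq \|\tilde s_j\| I$ together with $\tilde s_j \to 0$ does the rest mechanically.
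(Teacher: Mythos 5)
Your proof is correct, and it rests on the same core mechanism as the paper's: all but finitely many lifted points are redundant because, once $\tilde s_j$ is small, it is dominated in the order $\preceq$ by a fixed convex combination of finitely many earlier lifted points, so $\coni(\{\tilde s_i\})$ is generated by a finite subfamily of the $\tilde s_i$. Where you differ is in how the domination is obtained. The paper works from the outset inside the span $W$ of the sequence, picks a basis $r_1,\dots,r_k$ of $W$ from among the $s_i$, and shows by a direct Cauchy--Schwarz computation that $s s^*\preceq \frac1k\sum_j r_j r_j^*$ as soon as the coordinates of $s$ in this basis have modulus below $1/k^2$ (which holds eventually since $s_i\to 0$); this treats the full-rank and degenerate situations uniformly, with no case split. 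You instead argue softly via the interior of $\cS^n_+$: a finite average $\bar y$ of lifted points is positive definite in the spanning case, so $\delta I\preceq\bar y$ and $\tilde s_j\preceq\|\tilde s_j\|I\preceq\bar y$ once $\|\tilde s_j\|\le\delta$, and in the degenerate case you restrict to the face of matrices whose range lies in the relevant subspace. That reduction is the right fix, but note that the literal chain $\tilde s_j\preceq\|\tilde s_j\|I\preceq\delta I\preceq\bar y$ breaks in the degenerate case ($\delta I\preceq\bar y$ fails for singular $\bar y$); the repaired chain is $\tilde s_j\preceq\|\tilde s_j\|P_W\preceq\delta P_W\preceq\bar y$, where $P_W$ is the orthogonal projection onto $W$, valid because every $\tilde s_j=\RE(s_js_j^*)$ has range in $W$ (with $W$ taken as the real span of the real and imaginary parts of the $s_i$ --- a point on which the paper is similarly informal). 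With that small repair the two routes deliver the same conclusion: yours is slightly softer, avoiding explicit coordinate estimates, while the paper's coordinate argument avoids the nondegenerate/degenerate dichotomy altogether.
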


\begin{thm}\label{thm-as}
If a finite set of matrices $\cM$ is irreducible and asymptotically simple, then its lifted set $\widetilde \cM$ admits an extremal conitope.  Moreover, there exists such an extremal conitope whose all vertices are rank one matrices.
\end{thm}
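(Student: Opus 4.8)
\emph{Proof plan.} By scaling we may assume $\rho(\cM)=1$, so $\rho(\widetilde\cM)=1$ by Theorem~\ref{th:equivalence}. Fix a leading eigenvector $x$ of a minimal SMP $\bar P$; since $\cM$ is irreducible and $\mathrm{span}(\mathcal T[\cM,x])$ is a nonzero $\cM$-invariant subspace, it equals $\C^n$, so the hypotheses of Theorem~\ref{theo-as} hold. We obtain normalized products $P_1',\dots,P_s'$ such that, writing $v_0=x$ and $v_j=P_j'x$, the set $\mathcal S=\mathrm{absco}\{v_0,\dots,v_s\}$ is an $\cM$-invariant balanced complex polytope (and in particular $\mathrm{span}\{v_0,\dots,v_s\}=\C^n$, since $\mathcal S$ contains the trajectory). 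The plan is to prove that $\coni(U_0)$, with $U_0=\{\widetilde{v_0},\dots,\widetilde{v_s}\}\subset\cS^n_+$, is an extremal conitope for $\widetilde\cM$.

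The heart of the matter is the \emph{domination lemma}: $v\in\mathcal S\ \Rightarrow\ \widetilde v\in\coni(U_0)$. Writing $v=\sum_j\mu_jv_j$ with $T:=\sum_j|\mu_j|\le1$, a weighted Cauchy--Schwarz bound yields, for every \emph{real} vector $z$, $z^*\widetilde v\,z=|z^*v|^2=\big|\sum_j\mu_j(z^*v_j)\big|^2\le T\sum_j|\mu_j|\,|z^*v_j|^2=T\sum_j|\mu_j|\,z^*\widetilde{v_j}\,z$; hence, as real symmetric matrices, $\widetilde v\preceq T\sum_j|\mu_j|\widetilde{v_j}=T^2\sum_j\lambda_j\widetilde{v_j}\preceq\sum_j\lambda_j\widetilde{v_j}$ with $\lambda_j=|\mu_j|/T$ a probability vector, using $\widetilde{v_j}\succeq0$ and $T\le1$. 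So $\widetilde v\in\coni(U_0)$. Because $\mathcal S$ is $\cM$-invariant, $Av_j\in\mathcal S$, hence $\widetilde A\widetilde{v_j}=\widetilde{Av_j}\in\coni(U_0)$ for every $\widetilde A\in\widetilde\cM$; and since each $\widetilde A$ maps $\cS^n_+$ monotonically into itself, this propagates from the generators to all of $\coni(U_0)$, giving $\widetilde A\,\coni(U_0)\subseteq\coni(U_0)$.

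To finish the first assertion, observe that $\frac{1}{s+1}\sum_j\widetilde{v_j}=\RE\big(\frac{1}{s+1}\sum_jv_jv_j^*\big)$ is positive definite (a real $z\neq0$ with $z^*v_j=0$ for all $j$ would contradict $\mathrm{span}\{v_j\}=\C^n$), so $\conv(U_0)\cap\inter\cS^n_+\neq\emptyset$ and by Lemma~\ref{lem-valid-norm} $|\cdot|_{U_0}$ is a norm with closed unit ball $\coni(U_0)$; the latter is compact and, $U_0$ being finite, a conitope. The invariance just proved gives $\widehat\rho_{U_0}(\widetilde\cM)\le1$, while Proposition~\ref{prop-upperbound} gives $\rho(\widetilde\cM)\le\widehat\rho_{U_0}(\widetilde\cM)$; with $\rho(\widetilde\cM)=1$ this forces $\widehat\rho_{U_0}(\widetilde\cM)=\rho(\widetilde\cM)$, i.e.\ $|\cdot|_{U_0}$ is extremal. (That $\rho(\widetilde\cM)\ge1$ directly: $\bar P$ acts on the real plane $\mathrm{span}\{\RE(v_0),\Ima(v_0)\}$ as a rotation, which fixes $\widetilde{v_0}=\RE(v_0)\RE(v_0)^*+\Ima(v_0)\Ima(v_0)^*$, so $\widetilde{v_0}$ is a nonzero fixed point of $\widetilde{\bar P}$.)

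For the rank-one refinement: if the leading eigenvalue of $\bar P$ is real, $x$ and every $v_j$ can be taken real, so $\widetilde{v_j}=v_jv_j^*$ has rank one and we are done. The general case is the main obstacle: then $\widetilde{v_j}$ has rank two, but the finiteness in Theorem~\ref{theo-as} forces $\bar P$ to act on each relevant real $2$-plane as a finite-order rotation, so $\widetilde{v_j}$ is a convex combination of finitely many rank-one matrices $(\bar P^k\RE(v_j))(\bar P^k\RE(v_j))^*$ (and $\Ima(v_j)$ lies in the $\bar P$-orbit span of $\RE(v_j)$). I would then replace $U_0$ by a suitably rescaled finite family of such rank-one lifts; the delicate point, which I expect to be the real work, is to choose the scaling so that the conitope generated stays \emph{both} $\widetilde\cM$-invariant and extremal (a careless rescaling strictly enlarges it), with Lemma~\ref{lem-conv2zero} serving to certify that the rank-one vertex set one ends up with still generates a finitely generated conitope.
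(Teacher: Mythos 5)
Your route is genuinely different from the paper's, and its core computation is not where I would first object: the ``domination lemma'' you state is the standard covariance/Jensen inequality ($\sum_j\lambda_j w_jw_j^*-(\sum_j\lambda_j w_j)(\sum_j\lambda_j w_j)^*\succeq 0$), and your Cauchy--Schwarz chain is a correct proof of it, while the propagation step uses that congruence $X\mapsto AXA^T$ preserves the Loewner order (true for real $A$). But this puts your proposal in head-on conflict with the paper itself: the Remark following Theorem~\ref{thm-as} and Example~\ref{ex-non-inv} assert precisely that lifting the vertices of an invariant polytope does \emph{not} in general yield an invariant conitope, and this is presented as the key reason the lifted method differs from the polytope method. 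Your argument, if accepted, proves the opposite for every real centrally symmetric (or balanced complex) invariant polytope. Before your proof can stand you must resolve this contradiction explicitly --- either by locating the error in your lemma or by refuting the example (note that the example as printed tests $\widetilde A_2^\p\widetilde v_2=\widetilde v_4$, which is itself a vertex, so it cannot be taken at face value; but neither can your lemma simply override the paper's stated counterexample without comment). A second caveat: for genuinely complex matrices the ``lifted operator'' $\widetilde A$ acting on $\RE(vv^*)$ is not a function of $\widetilde v$ alone, so both your invariance propagation and the meaning of $\widetilde A\,\coni(U_0)\subseteq\coni(U_0)$ need to be redone with the Hermitian lift; your write-up tacitly assumes real matrices.

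The second assertion of the theorem is where the genuine gap lies: you do not prove that the vertices can be taken rank one, you only sketch a plan, and the plan's key claim is wrong. Finiteness of the vertex set in Theorem~\ref{theo-as} is finiteness \emph{modulo scalar factors of unitary modulus}, so a complex leading eigenvalue $e^{i\theta}$ with $\theta/\pi$ irrational is perfectly compatible with it; it does not force $\bar P$ to act as a finite-order rotation on the relevant real $2$-planes, and the subsequent rescaling scheme is left entirely open. Compare with the paper's proof, which avoids both problems by a different construction: it takes $S=\coni(\widetilde V)$ where $\widetilde V$ is the \emph{lifted trajectory} $\{\widetilde A_{i_t}\cdots\widetilde A_{i_1}\widetilde v\}$ of a leading eigenvector, notes $S$ is invariant and bounded by irreducibility, and proves by contradiction (an infinite essential subfamily must lie on one trajectory, is bounded away from zero by Lemma~\ref{lem-conv2zero}, has a limit point in $X$, and asymptotic simplicity then forces $v_{t_0+1}\in X$) that finitely many trajectory points generate $S$. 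In that construction every vertex is automatically a lift $\widetilde v_t$ of a single vector, which is what the paper means by ``rank one''; if you insist on literal rank one for complex eigenvectors, that issue is not settled by your sketch (nor, strictly, addressed by the paper). As it stands, your proposal establishes at most the first assertion, conditionally on resolving the conflict with Example~\ref{ex-non-inv}, and leaves the rank-one refinement unproven.
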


\begin{remark} \rm \ \\
Note that if one applies the polytope algorithm in \cite{GuWiZe05,GuZe09} and then lifts the vertices
$\{ v_i \}$ and considers the corresponding conitope, one does not obtain in general an invariant set 
for the lifted set of matrices. This is a key point in order to understand that the two algorithms are
fundamentally different (see the following Example \ref{ex-non-inv} in Section \ref{sec:numer}). 

\end{remark}

\section{Numerical examples}\label{section-examples}
\label{sec:numer}

We provide here some illustrative examples demonstrating the effectiveness of the
methods proposed in this article.

\begin{ex} \rm
Consider the set of matrices:
$$\cm= \left \{  
\left(
\begin{array}{rrrr}
0&    -1   &     1     &    1\\
1&     0   &     0     &    0\\
0&    -1   &     0     &    0\\
1&    -1   &    -1     &    0\\ 
\end{array}
\right)
         ,\
\left(
\begin{array}{rrrr}
 0&   -1   &     1     &    0\\
-1&   -1   &     1     &    1\\
-1&    0   &     0     &    0\\
-1&   -1   &     0     &   -1
\end{array}
\right)
\right \}.
$$

This set of matrices has a joint spectral radius $$ \rho(\cm) = \rho(A_2) \approx 1.779.$$  

The classical BCP algorithm terminates in $4$ steps and finds a BCP which is the convex hull of $16$ vertices.  
Algorithm $1$ terminates after $2$ steps and finds an invariant conitope with only $7$ vertices.
\end{ex}

\begin{ex}\rm
Our next example presents a set of complex matrices.
{\small
\begin{eqnarray}\nonumber \cm&=& \left \{
\left(
\begin{array}{rrrr}
-1+i&    -i   &     -1+i     \\
0&     1   &     -1-i     \\
1+i&    -i   &     -1-i     \\
\end{array}
\right)\right.
         ,\ \\ \nonumber
&& \left. \left(
\begin{array}{rrrr}
i&    -1-i   &     -1     \\
1-i&     -1+i   &    i     \\
-1+i&    1+i   &     1+i     \\
\end{array}
\right)
\right \}.
\end{eqnarray}}

In this case $ \rho(\cm) = \rho(A_1A_1A_2A_1A_2) \approx 2.2401.$

The classical BCP algorithm requires $20$ steps to find a BCP with $65$ essential vertices.
Algorithm $1$ terminates after $8$ steps and the invariant conitope has only $10$ essential vertices.
\end{ex}
\begin{ex}\label{ex-non-inv}  \rm
The following set of matrices has an invariant polytope $\mbox{conv}{\{v_i\}},$ but it appears that simply lifting its vertices does not provide an invariant conitope: $\conitope (\{\widetilde v_i\})$ is not invariant for $\widetilde \cm.$ 
$\mathcal{M} = \{ A_1, A_2 \}$ with
$$
A_1= \left( \begin{array}{rrr} 0 & 1 &  1 \\  1 & 0 & 0 \\  0 & -1 & 0 \end{array} \right), \qquad 
A_2= \left( \begin{array}{rrr} 0 & 1 &  0 \\ -1 & 0 & 1 \\ -1 &  0 & 0 \end{array} \right).
$$
The spectrum maximizing product is $P = A_1 A_2$ and has a real leading eigenvector.
An extremal real centrally symmetric polytope has been computed by the algorithm described 
in \cite{GuZe08} and gives $6$ vertices:
$$
v_1 = x, \quad  v_2 = A_1^\p v_1, \quad v_3 = A_2^\p v_1, $$
$$ 
v_4 = A_2^\p v_2, \quad v_5 = A_2^\p v_3, \quad v_6 = A_1^\p v_5, 
$$
where $A_i^\p = A_i/\rho(P)$, $i=1,2$.

The simple algebraic test that $\widetilde w = \widetilde A_2^\p \widetilde v_2 $
does not belong to the conitope shows that computing the set of lifted vertices 
yields a conitope which is not invariant. 
\end{ex}

\section{Conclusion}
In this paper we introduced a new method for deciding the stability of switching systems.  By combining the lifting technique and the extremal polytope technique, we have shown that one can in some cases outperform the other algorithms in the literature.  We have also provided a sufficient condition for the existence of an extremal norm for the lifted matrices (``an extremal conitope'').

A huge advantage of our algorithms is that they allow to disgard in a systematic way many products in the semigroup, thanks to the partial order relation induced by the semidefinite cone. This can greatly reduce the computation time in some situations.  Another advantage is that the algorithms can terminate in finite time, and provide a proof that a particular product is actually an SMP, by exhibiting a convex set (the extremal conitope) which is a valid extremal norm.
The same ideas apply for the \emph{joint spectral subradius}, and provide the first efficient stopping criterion that allows to compute exactly the joint spectral subradius for general sets of matrices (i.e. with negative and positive entries).  We delay for further work the empirical study of these methods for the computation of the joint spectral subradius.\\ Also, we leave for further research the question of whether similar methods could be applied in order to approximate \emph{other joint spectral characteristics,} like the so-called $p$-radius, or the Lyapunov exponent.  Indeed, it has been shown recently that other methods based on semidefinite programming are useful for computing these quantities as well (e.g. \cite{jungers-protasov-pradius,protasov-jungers-lyap}).
 
The main computational burden, at every step, is in the computation of the quantity $|Av|_V,$ i.e., the semidefinite program allowing to compute the conitope norm of the new vertices.  If the matrices do not share a priori a common invariant cone, one must work in the lifted space in order to have an invariant cone (that is, $\mathcal S_{n+}$), but then one has to resort to semidefinite programming to represent the conitope on a computer.  We leave it as an open question whether it is possible to apply similar ideas to general matrices with a simpler cone, on which the computations are less expensive.

\section{Acknowledgements}
This work was carried out while RJ was visiting the University of L'Aquila.  This author wants to thank the University for its outstanding hospitality.

A Matlab implementation of the methods developed here is available in the JSR Toolbox \cite{jsr-toolbox}.

\appendix{Proof of Theorem \ref{thm-conv}\\
\begin{proof} In the following, $\rho$ denotes $\rho(\widetilde \cm)=\rho(\cm)^2.$

\textbf{Lower bound}  With the notations of Algorithm $1,$ let us call $\cm'$ the scaled set of matrices: $$ \cm'=\cm/C.$$
We want to prove that the lower bound $C$ converges towards the actual value of the Joint Spectral Radius.  Thus, for Algorithm $1,$ it suffices to show that if $\rho(\cm')>1$ (i.e. if $A$ is not an SMP), the main loop terminates in finite time, and the algorithm will hence look for longer products by returning to the preprocessing phase;  since we have the well known alternative definition (\ref{eq:GSR}) for the JSR, the lower bound will converge towards $\rho.$\\ 
From Proposition \ref{prop-subspace}, we can suppose that we have $n$ vectors $v_1,\dots,v_n\in \cP(\cm') v_0$ linearly independent. 

Since $\rho(\cm')>1,$ it is possible to find arbitrary large vectors satisfying this property.  In particular, they can be such that $$\sum(\widetilde v_i)/n \succeq \widetilde v_0.$$ 

Hence, the algorithm will terminate, because the last equation implies that $U_0 \subset \conitope(U)$ at some time, which is exactly the termination condition in Algorithm $1.$

For algorithm $2,$ recall that for any $t$ and any norm $|\cdot|,$
$$\rho(\widetilde\cm)^t \leq \sup \{|\widetilde A u| : |u|=1, \widetilde A\in \widetilde\cM^t\}.$$  Thus, there exists a vector $u\in U_0,$ an infinite product $\dots \widetilde A_t \dots\widetilde  A_1,$ and a sequence $t_{i_1}, t_{i_2},\dots$ such that $\forall i, |\widetilde A_{t_{i}}\dots\widetilde  A_1u|\geq \rho^{t_i}.$  Let us now consider the normalized sequence $u_t'=u_t/|u_t|,$ $u_t=\widetilde A_t \dots\widetilde  A_1 u.$ This is an infinite sequence on the boundary of the unit ball of the norm.  By compactness of this boundary, there exists a subsequence $s_i$ such that the $u_{s_i}'$ converge to a vector $y : |y|=1.$ So for any $\epsilon$, there must exist $s_j$ such that for all $s_i>s_j,$
$$|u_{s_j}'-u_{s_i}'|<\epsilon.$$\\  Setting $z=u'_{s_j}, \widetilde B_{i}=\widetilde A_{s_i}\dots \widetilde A_{s_j+1}(|u_{s_j}|/|u_{s_i}|),$ we get matrices $\widetilde B_{i}\in \P(\widetilde \cM)$ such that

$$ |B_{i}z-z|<\epsilon, $$ and we can conclude by Proposition \ref{lem-az-z} that $B_{i}$ has an eigenvalue $\lambda$ such that $|1-\lambda|=O(\epsilon^{1/n}), $ which implies that
$$\rho(\widetilde A_{s_i}\dots \widetilde A_{s_j+1})^{1/(s_i-s_j)}\rightarrow \rho. $$ 
As a conclusion, Algorithm $2,$ which checks the spectral radius of all the products of the type $\widetilde A_{t_i}\dots \widetilde A_{t_j},$ will find candidates SMP whose averaged spectral radius tends to $\rho.$
%

\textbf{Upper bound}
Again from Proposition \ref{prop-subspace}, we can suppose that $\cm$ has no real invariant subspace.  This implies \cite[Theorem 2.1]{jungers_lncis} that the semigroup $\cP\left( (\widetilde \cm/\rho) \right)$ is bounded.  

Therefore, denoting by $C_t$ the lower bound $C$ at step $t,$ we have $U_t=(\widetilde \cm/C_t)^*U_0$ tends to $U_{\infty}=\cP(\widetilde \cm/\rho) U_0$ in Hausdorff distance.  

Hence the quantity $\sup_{\widetilde A \in \widetilde \cm,  u \in U_t}{|\widetilde A u |_{U_t}}$ tends towards  $$\sup_{\widetilde A \in \widetilde \cm, u \in U_\infty}{|\widetilde A u|_{U_\infty}}=\rho(\widetilde \cm).$$
\end{proof}

\appendix{Proof of Lemma \ref{lem-conv2zero}\\
\begin{proof}

Define $W=span(\{s_i\}_{i>0}),$ $k=dim(W),$ and take a set $\{r_1,\dots ,r_k\}\subset \{s_i\}_{i>0}$ 
of full rank.
Then, all the $s_i$  can be written as $\sum_{j=1,\dots, k}{\lambda_j r_j}.$

%

  Take now a vector $s_i$ with $i$ large enough such that \begin{equation}\label{eq-rs}\{r_1,\dots, r_k\}\subset \{s_1,\dots, s_{i-1}\},\end{equation} and such that, denoting $s_i=\sum\limits_{j=1}^{k}{\lambda_j r_j},$ ${|\lambda_j|<1/k^2}.$ 
  
{\textbf{We claim that}}\begin{equation}\label{eq-lem-zerosequence}
s_is_i^* \preceq \sum\limits_{j=1}^{k}(r_jr_j^*)/k.
\end{equation} 
Indeed, 
taking an arbitrary vector $x\in \re^n$, we have 
\begin{eqnarray*}
x^*s_is_i^*x &=&
\sum\limits_{j_1,j_2=1}^{k}{(\lambda_{j_1}\lambda_{j_2}) (x^*r_{j_1}r_{j_2}^*x)}\\&\leq &\sum\limits_{j_1,j_2=1}^{k}{|\lambda_{j_1}||\lambda_{j_2}| |r_{j_1}^*x||r_{j_2}^*x|}\\ &\leq& k^2 (1/k^2)^2 \max_j{|r_j^*x|^2}\\&\leq & x^*\Big(\sum\limits_{j=1}^{k} (r_jr_j^*)/k\Big)x. \end{eqnarray*}
Since it is true for any $x\in \re^n,$ (\ref{eq-lem-zerosequence}) follows by definition.

As usual we indicate as $\widetilde s = \RE(s s^*)$.
Equations (\ref{eq-rs}) and (\ref{eq-lem-zerosequence}) together imply that 
$$ \widetilde s_i \in \conitope{(\widetilde s_1,\dots, \widetilde s_{i-1})},$$ and then, 
$$\conitope(\{\widetilde s_1,\dots,\widetilde s_i\})=\conitope(\{\widetilde s_1,\dots,\widetilde s_{i-1}\}).$$ 
Now, take $i$ such that $$\forall l>i, \exists\, \lambda_1,\dots, \lambda_k : |\lambda_j|<1/k^2, s_l=\sum\limits_{j=1}^{k}{\lambda_j r_j}.$$

Thus, for $i$ large enough we have $$\conitope(\{\widetilde s_1,\dots,\widetilde s_i\})
=\conitope(\{\widetilde s_1,\dots,\widetilde s_{i-1}\}),$$
which implies that the limit of this set actually is a conitope, since the limit is reached for a finite number of vertices.
\end{proof}

\appendix{Proof of Theorem \ref{thm-as}}\\
\begin{proof}
We assume without loss of generality that $\rho(\cM) = \rho(\widetilde \cM)=1.$ 
Let us denote $X$ the finite set of leading eigenvectors (modulo scalar nonzero factors) of $\cM.$  Take a particular $v\in X$ and denote $\widetilde V$ the set of trajectories starting at 
$\widetilde v = \RE(vv^*)$ under arbitrary products of matrices in $\widetilde\cM:$ 
$$\widetilde V=\{\widetilde A_{i_t}\dots \widetilde A_{i_1} (\widetilde v): \widetilde 
A_{i_j}\in \widetilde \cM\ \ \forall j\}.$$

 We now define \begin{equation}\label{eq-defS}S=\coni{(\widetilde V)}.
 \end{equation}
 
Since $\widetilde \cM$ is irreducible, again by \cite[Theorem 2.1]{jungers_lncis}, the set $\widetilde V$ is bounded, and $S$ is an invariant set for $\widetilde\cM.$ The last follows by the fact that if $U$ is an invariant set, then $\coni{(U)}$ is also 
an invariant set.

We will prove that there exist a finite number of vectors $\widetilde v_1,\dots, \widetilde v_l\in \widetilde V$ such that $S=\conitope(\{\widetilde v_1,\dots,\widetilde  v_l\}),$ and so $S$ is an invariant conitope for $\widetilde \cM.$ 

Suppose the contrary.\\ 
Then, there must be an infinite set of vectors $\widetilde V'\subset \widetilde V$ such that \begin{equation}\label{eq-infinite-traj}\forall\, \widetilde v_i \in \widetilde V', \widetilde v_i \not \in \coni{(\widetilde V'\setminus \widetilde v_i)}.\end{equation}  
Moreover, by compactness of the set $S$ 
we can suppose that $\widetilde V'$ is a subset of a single trajectory of $\widetilde v,$ that is, there exist indices  $i_1,i_2,\dots$ such that the set $$\widetilde V'=\{\widetilde A_{i_t}\dots \widetilde A_{i_1}\widetilde v: t\geq 0\}$$ contains an infinite set satisfying Equation (\ref{eq-infinite-traj}). 
(For a detailed proof of this fact see for instance 
\cite[Lemma 5.19]{GuWiZe05}.)

We note $\widetilde v_t=\widetilde A_{i_t}\dots \widetilde A_{i_1}\widetilde v,$ and 
correspondingly $ v_t=    A_{i_t}\dots   A_{i_1} v.$ 
Since this set is infinite, there must be an index $t_0$ such that $v_{t_0}\in X, \ v_{t_0+1}\not \in X.$\\

Now, by compactness of $\coni({\widetilde V'})$, there must exist a subsequence of $\widetilde v_t$ that converges to a vector $\widetilde w.$ This vector $\widetilde w$ is different from zero because, by Equation (\ref{eq-infinite-traj}) and Lemma \ref{lem-conv2zero}, the sequence $\widetilde A_{i_{t}}\dots \widetilde A_{i_1}\widetilde v$ is bounded away from zero.  Then, since $$ \widetilde v_t = \RE(v_tv_t^*), $$ this must be the case also for the original (unlifted) matrices: hence $$ \exists\, t_0<t_1<t_2<\dots t_k\dots: A_{i_{t_k}}\dots A_{i_{t_{0}+2}}v_{t_0+1} \rightarrow w,$$ and then $w\in X. $ Now, by asymptotic simplicity, there exists a matrix $P\in \P(\cM)$ such that $Pw=v_{t_0}.$ This implies that $P A_{i_{t_k}}\dots  A_{i_{t_{0}+2}} v_{t_0+1}  \rightarrow v_{t_0},$ and hence, premultiplying the above equation by $A_{i_{t_0+1}},$ we obtain that $$ A_{i_{t_0+1}}P A_{i_{t_k}}\dots  A_{i_{t_{0}+2}} v_{t_0+1}  \rightarrow v_{t_0+1},$$ and $v_{t_0+1}$ is in $X,$ which gives a contradiction. 
\end{proof}

\begin{thebibliography}{10}

\bibitem{ajprhscc11}
P.~Parrilo A.~A.~Ahmadi, R. M.~Jungers and M.~Roozbehani.
\newblock Analysis of the joint spectral radius via lyapunov functions on
  path-complete graphs.
\newblock In {\em Hybrid Systems: Computation and Control (HSCC'11)}, Chicago,
  2011.

\bibitem{berger-wang}
M.~A. Berger and Y.~Wang.
\newblock Bounded semigroups of matrices.
\newblock {\em Linear Algebra and its Applications}, 166:21--27, 1992.

\bibitem{blondel-kron}
V.~D. Blondel and Y.~Nesterov.
\newblock Computationally efficient approximations of the joint spectral
  radius.
\newblock {\em SIAM Journal on Matrix Analysis and Applications},
  27(1):256--272, 2005.

\bibitem{BoyVan04}
S.~Boyd and L.~Vandenberghe.
\newblock {\em Convex Optimization}.
\newblock Cambridge University Press, New-York, USA, 2004.

\bibitem{chitouretal12}
Y.~Chitour, P.~Mason, and M.~Sigalotti.
\newblock On the marginal instability of linear switched systems.
\newblock {\em Systems and Control Letters, to appear}, 2012.

\bibitem{ciconeetal09}
A.~Cicone, N.~Guglielmi, S.~Serra~Capizzano, and M.~Zennaro.
\newblock Finiteness property of pairs of $2 \times 2$ sign-matrices via real
  extremal polytope norms.
\newblock {\em Linear Algebra and its Applications}, 432:796--816, 2009.

\bibitem{daub-lag}
I.~Daubechies and J.~C. Lagarias.
\newblock Sets of matrices all infinite products of which converge.
\newblock {\em Linear Algebra and its Applications}, 161:227--263, 1992.

\bibitem{elsner-generalized}
L.~Elsner.
\newblock The generalized spectral-radius theorem: An analytic-geometric proof.
\newblock {\em Linear Algebra and its Applications}, 220:151--159, 1995.

\bibitem{valcher-positive}
E.~Fornasini and M.-E. Valcher.
\newblock Stability and stabilizability criteria for discrete-time positive
  switched systems.
\newblock {\em IEEE Transactions on Automatic Control, to appear}, 2012.

\bibitem{sun-ge}
S.S. Ge and Z.~Sun.
\newblock {\em Switched linear systems: Control and design}.
\newblock Communications and control engineering series. Springer, 2005.

\bibitem{guglielmiprotasov}
N.~Guglielmi and V.~Protasov.
\newblock Exact computation of joint spectral characteristics of linear
  operators.
\newblock {\em Preprint: \url{http://arxiv.org/abs/1106.3755}}.

\bibitem{GuWiZe05}
N.~Guglielmi, F.~Wirth, and M.~Zennaro.
\newblock Complex polytope extremality results for families of matrices.
\newblock {\em SIAM J. Matrix Anal. Appl.}, 27(3):721--743, 2005.

\bibitem{GuZe08}
N.~Guglielmi and M.~Zennaro.
\newblock An algorithm for finding extremal polytope norms of matrix families.
\newblock {\em Linear Algebra Appl.}, 428(10):2265--2282, 2008.

\bibitem{GuZe09}
N.~Guglielmi and M.~Zennaro.
\newblock Finding extremal complex polytope norms for families of real
  matrices.
\newblock {\em SIAM J. Matrix Anal. Appl.}, 31(2):602--620, 2009.

\bibitem{jungers_lncis}
R.~M. Jungers.
\newblock The joint spectral radius, theory and applications.
\newblock In {\em Lecture Notes in Control and Information Sciences}, volume
  385. Springer-Verlag, Berlin, 2009.

\bibitem{jungers-laa12}
R.~M. Jungers.
\newblock On asymptotic properties of matrix semigroups with an invariant cone.
\newblock {\em Linear Algebra and its Applications, to appear}, 2012.

\bibitem{jungersprotasov09}
R.~M. Jungers and V.~Protasov.
\newblock Counterexamples to the cpe conjecture.
\newblock {\em SIAM Journal on Matrix Analysis and Applications},
  31(2):404--409, 2009.

\bibitem{jungers-protasov-pradius}
R.~M. Jungers and V.~Yu. Protasov.
\newblock Fast methods for computing the $p$-radius of matrices.
\newblock {\em SIAM Journal on Scientific Computing}, 33(3):1246--1266, 2011.

\bibitem{liberzon-switching}
D.~Liberzon.
\newblock {\em Switching in Systems and Control}.
\newblock {Birkh\"auser}, Boston, MA, 2003.

\bibitem{MM11}
T.~Monovich and M.~Margaliot.
\newblock Analysis of discrete-time linear switched systems: a variational
  approach.
\newblock {\em SIAM Journal on Control and Optimization}, 49:808--829, 2011.

\bibitem{morris-barabanov12}
I.~D. Morris.
\newblock A new sufficient condition for the uniqueness of barabanov norms.
\newblock {\em SIAM Journal on Matrix Analysis and Applications, to appear},
  2012.

\bibitem{parrilo-jadbabaie}
P.~Parrilo and A.~Jadbabaie.
\newblock Approximation of the joint spectral radius using sum of squares.
\newblock {\em Linear Algebra and its Applications}, 428(10):2385--2402, 2008.

\bibitem{protasov1}
V.~Y. Protasov.
\newblock The joint spectral radius and invariant sets of linear operators.
\newblock {\em Fundamentalnaya i prikladnaya matematika}, 2(1):205--231, 1996.

\bibitem{protasov-jungers-lyap}
V.~Yu. Protasov and R.~M. Jungers.
\newblock Convex optimization methods for computing the lyapunov exponent of
  matrices.
\newblock {\em Preprint}, 2011.
\newblock {A}vailable online at
  \url{http://perso.uclouvain.be/raphael.jungers/publis_dispo/protasov_jungers%
_lyapunov.pdf}.

\bibitem{jungers-protasov-blondel09}
V.~Yu. Protasov, R.~M. Jungers, and V.~D. Blondel.
\newblock Joint spectral characteristics of matrices: a conic programming
  approach.
\newblock {\em SIAM J. Matrix Anal. Appl.}, 31(4):2146--2162, 2009/10.

\bibitem{protasov-laa-2010}
Vladimir~Yu. Protasov.
\newblock When do several linear operators share an invariant cone?
\newblock {\em Linear Algebra and its Applications}, 433:781--789, 2010.

\bibitem{RoSt60}
G.~C. Rota and W.~G. Strang.
\newblock A note on the joint spectral radius.
\newblock {\em Indag. Math.}, 22:379--381, 1960.

\bibitem{Shorten05stabilitycriteria}
Robert Shorten, Fabian Wirth, Oliver Mason, Kai Wulff, and Christopher King.
\newblock Stability criteria for switched and hybrid systems.
\newblock {\em SIAM Review}, 49:545--592, 2007.

\bibitem{tsitsiklis97lyapunov}
J.~N. Tsitsiklis and V.~D. Blondel.
\newblock The {L}yapunov exponent and joint spectral radius of pairs of
  matrices are hard - when not impossible - to compute and to approximate.
\newblock {\em Mathematics of Control, Signals, and Systems}, 10:31--40, 1997.

\bibitem{jsr-toolbox}
G.~Van~Keerberghen, J.~M. Hendrickx, and R.~M Jungers.
\newblock The {J}{S}{R} toolbox for {M}atlbab.
\newblock {\em Matlab {C}entral,
  \url{http://www.mathworks.com/matlabcentral/fileexchange/33202-the-jsr-toolb%
ox}}.

\end{thebibliography}
\end{document}